\newtheorem{theorem}{Theorem}[section]
\newtheorem{lemma}[theorem]{Lemma}
\newtheorem{proposition}[theorem]{Proposition}
\newtheorem{corollary}[theorem]{Corollary}
\newtheorem{conjecture}[theorem]{Conjecture}
\theoremstyle{definition}
\newtheorem{definition}[theorem]{Definition}
\newtheorem{example}[theorem]{Example}
\theoremstyle{remark}
\newtheorem{remark}[theorem]{Remark}
\numberwithin{equation}{section}
\begin{document}

\title{A determinant formula of the Jones Polynomial for a family of braids}
\author{Derya Asaner, Sanjay Kumar, Melody Molander, Andrew Pease, Anup Poudel}
\date{}

\maketitle

\begin{abstract}

In 2012, Cohen, Dasbach, and Russell presented an algorithm to construct a weighted adjacency matrix for a given knot diagram. In the case of pretzel knots, it is shown that after evaluation, the determinant of the matrix recovers the Jones polynomial. Although the Jones polynomial is known to be $\#P$-hard by Jaeger, Vertigan, and Welsh, this presents a class of knots for which the Jones polynomial can be computed in polynomial time by using the determinant. In this paper, we extend these results by recovering the Jones polynomial as the determinant of a weighted adjacency matrix for certain subfamilies of the braid group. Lastly, we compute the Kauffman polynomial of $(2,q)$ torus knots in polynomial time using the balanced overlaid Tait graphs. This is the first known example of generalizing the methodology of Cohen to a class of quantum invariants which cannot be derived from the HOMFLYPT polynomial.

    
\end{abstract}

\section{Introduction}\label{introduction}
A fundamental question of knot theory is determining whether two knots are distinct. A useful tool which has been constructed to answer such a question is the use of knot invariants, such as knot polynomials. In this process, knots are assigned a polynomial, and if they are distinct for two knots, then it is known that these two knots must be different. One such example of a knot polynomial is the famous Jones polynomial.

Although knot polynomials are utile, they are very difficult to compute. Thus, knot theorists have sought quicker ways to compute them. A combinatorial approach through the use of graph theory has been of particular interest. One such method, introduced by Morwen Thistlethwaite \cite{Thi86}, consists of assigning a knot a corresponding graph, called a Tait graph. Then, by performing contractions and deletions of the Tait graph, we can compute the Tutte polynomial associated to the graph. Equivalently, the Tutte polynomial can be obtained by summing over all of the Tait graph's spanning trees with specified weights. Performing contractions and deletions on the Tait graph corresponds to smoothing resolutions of a knot, which arise within the bracket polynomial. In particular, the Tutte polynomial can be specialized to recover the bracket polynomial. 
\par
Cohen, Dasbach, and Russell \cite{Coh14} used Thistlethwaite's work to introduce a similar method that recovers the bracket polynomial for the class of pretzel knots. Specifically, they assign a knot a corresponding graph called a balanced overlaid Tait graph. Then, the edges of the graph are assigned special weightings. By taking the determinant of a specific submatrix of the graph's adjacency matrix, with each entry being the weight of the edge connecting the incident vertices, the bracket polynomial is obtained. Similarly, it is known \cite{Coh12} that the determinant can also be computed directly by considering all perfect matchings of the balanced overlaid Tait graph in the following way. We take the product of all of the weights associated with each specific perfect matching, then sum over all such terms from perfect matchings. The resulting sum will be equivalent to the computed determinant. In the case when the determinant recovers the bracket polynomial, we say that the knot admits a dimer model. As the Jones polynomial is closely related to the bracket polynomial, this method gives a determinant formula for the Jones polynomial for pretzel knots. 
\par
It is well known that the computational complexity of the Jones polynomial of a knot is $\#P$-hard due to the work of Jaeger, Vertigan, and Welsh \cite{Jae92}. However, an advantage in the work of Cohen, Dasbach, and Russell is that they construct an algorithm that can compute the Jones polynomial for a class of knots in polynomial time. Although it is not possible for all of the Jones polynomials to be computed in this manner due to the $\#P$-hard restriction, there may exist other examples where the computationally-fast algorithm applies. One advantage of having computationally-fast methods are its application in conjectures between the geometry and quantum topology of manifolds. For example, the volume conjecture, first posed by Kashaev in \cite{Kas97} and reformulated in terms of the colored Jones polynomial by Murakami and Murakami \cite{MM01}, relies on determining asymptotics of the complex calculations involved in the colored Jones polynomial. Therefore, a method in simplifying the calculations may provide useful in such conjectures.

\par
In this paper, we apply Cohen's results to a different class of knots: homogeneous closed braids of the form $\sigma_1^{m_1}\sigma_2^{m_2}\dots\sigma_{n-1}^{m_{n-1}}$, where the exponents are all negative or positive. Namely, we show in our main theorem that, for this class of knots, the polynomial obtained from the balanced overlaid Tait graph by summing over all the perfect matchings is equivalent to the polynomial obtained from the Tait graph by summing over all the spanning trees. Since Thistlethwaite proved that the spanning tree method is known to give the bracket polynomial for all knots, this will give that the perfect matching method also gives the bracket polynomial for this particular class of knots. As a result, the homogeneous closed braids of the form $\sigma_1^{m_1}\sigma_2^{m_2}\dots\sigma_{n-1}^{m_{n-1}}$ have a determinant formula for their Jones polynomial and is computationally fast. 

Apart from the well known generalization of the Jones and the Alexander polynomial given by the HOMFLYPT polynomial, there are other quantum invariants presented using skein relations that are not obtained as a specialization of the HOMFLYPT polynomial. In this paper we also provide a polynomial-time algorithm to compute one such invariant, the Kauffman polynomial, using the matrices obtained from the balanced overlaid Tait graphs of $(2,q)$-torus knots.  



Section \ref{knottheory} covers the basics of knot theory and definition of the braid group on $n$-strands. The skein theoretic presentation of the Jones and the bracket polynomial are given in Section \ref{knotinvariants}. Graph theoretic preliminaries that lead to the definition of the signed Tait graph and the balanced overlaid Tait graph along with the algorithm to assignment of the activity letters leading to the definition of a dimer model are discussed in Section \ref{graphs}. We work out the details of a dimer model for the $(2,q)$ torus knots in Section \ref{torusdimer} and prove the existence of a dimer model for a family of links obtained as homogeneous braid closures in Section \ref{hombraiddimer}. Section \ref{Kauffman Polynomials} provides a recursive formula for computing the Kauffman polynomial of $(2,q)$ torus knots using the balanced overlaid Tait graphs. Finally, the future directions are discussed in Section \ref{furtherdirections}.


\section{Acknowledgements}
This project was supported by the National Science Foundation Grant DMS-1850663 through the UC Santa Barbara Math REU which was led by Maribel Bueno. 


\section{Knot Theory}\label{knottheory}
A \textbf{\emph{knot}} is a smooth embedding of a circle in $\mathbb{R}^3$. Similarly, a smooth embedding of a disjoint collection of circles into $\mathbb{R}^3$ gives us a \textbf{\emph{link}}. A \textbf{\emph{link diagram}} is the 2-dimensional projection of a link from $\mathbb{R}^3$ onto the plane with crossing information. A \textbf{\emph{crossing}} is a point of intersection within a link. Links may consist of over- and under-crossings, which are illustrated in Figure \ref{trefoilfig}. 

\begin{center}
\begin{figure}[h]
\includegraphics[width = 20mm]{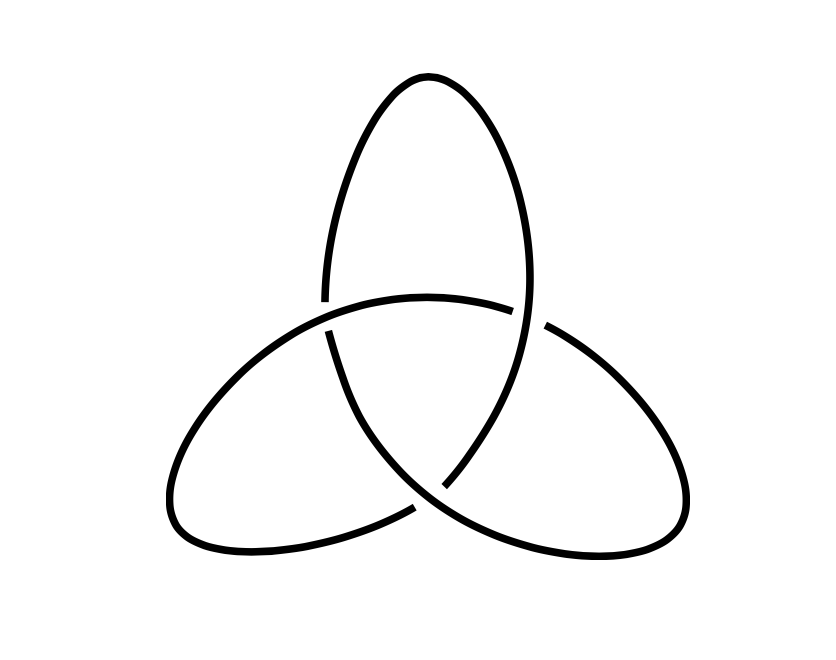}
\caption{diagram of the right-handed trefoil knot.}
\label{trefoilfig}
\vspace{-1cm}
\end{figure}
\end{center}

Two knots (and links) are equivalent if they are isotopic to each other. In terms of knot (and link) diagrams, a classical theorem of Reidemeister \cite{Rei27} tells us that this is equivalent to checking if any two link diagrams differ from one another by a sequence of Reidemeister moves and planar isotopies. 
Inducing an orientation on these crossings allows us to distinguish one from another as shown below.

\begin{center}
\begin{figure}[hbt!]
  \includegraphics[width = 45mm]{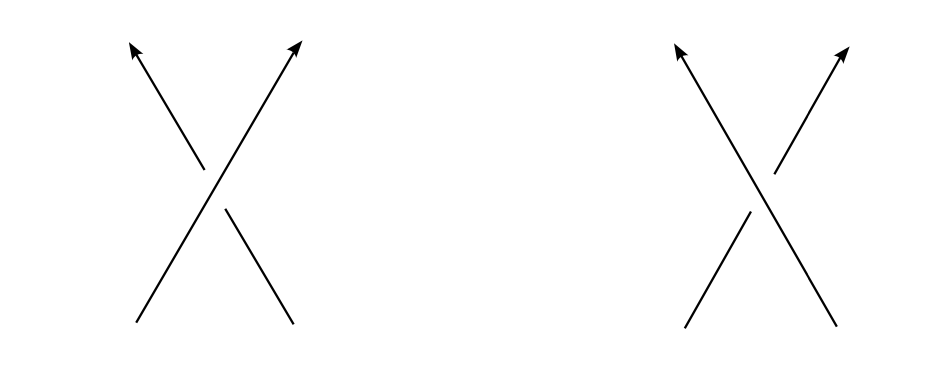}
    \caption{a positive and negative crossing, respectively}
\end{figure}
\vspace{-1cm}
\end{center}


There exist many different classes of links. We are particularly interested in links that can be obtained as closures of homogeneous braids, which will be discussed below. Closed braids arise from the braid group. The \textbf{$n$-strand braid group}, $B_n$, consists of $n$ strands such that the operation is the concatenation of strands. The identity element of $B_n$ consists of $n$ straight strands which do not intersect. Thus, each strand $i$ begins in the $i^{th}$ position on the top and ends at the same $i^{th}$ position at the bottom. 

\begin{center}
\begin{figure}[h]
\includegraphics[width = 80mm]{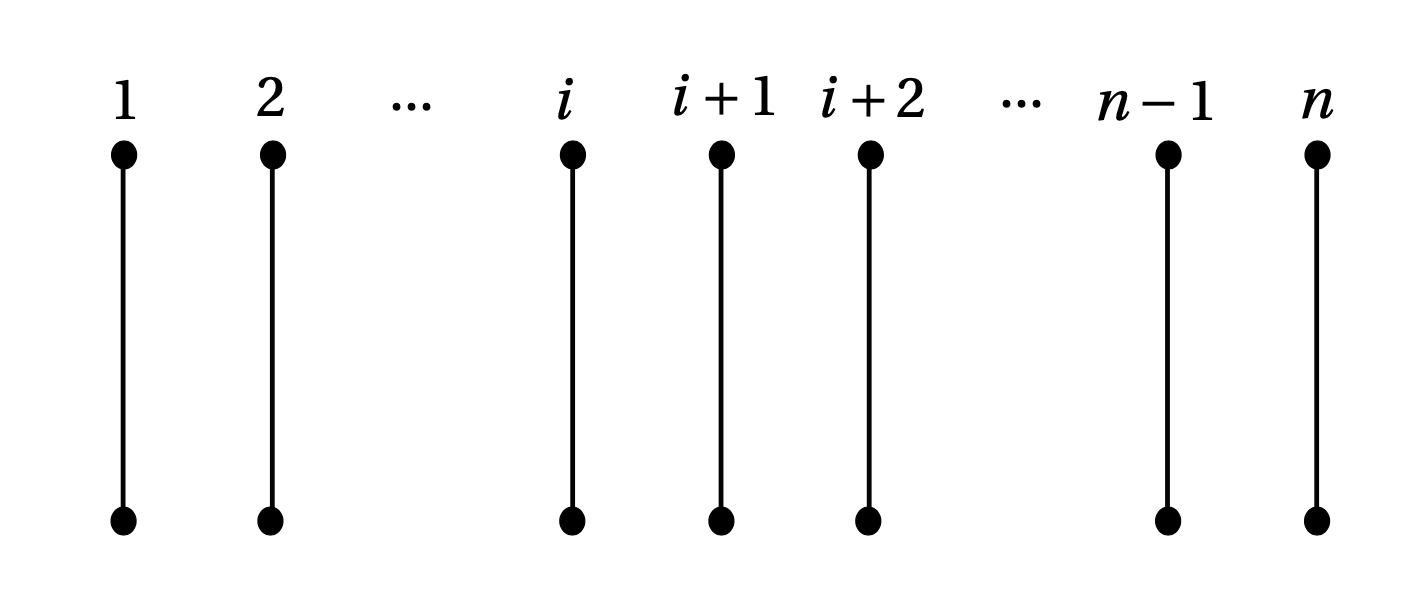}
\caption{$e \in B_n$}
\end{figure}
\end{center} 
\vspace{-1cm}

\noindent A presentation of the braid group on $n$-strands, $B_n$, first defined by Artin \cite{Artin47}, is as follows:
    \begin{equation*}
       B_n= \langle \sigma_1, \dots, \sigma_{n-1} | \sigma_i \sigma_j= \sigma_i \sigma_j \text{ for $|i-j| \geq 2$},\hspace{2mm} \sigma_i \sigma_{i+1} \sigma_i= \sigma_{i+1} \sigma_i \sigma_{i+1} \text{ for  $1 \leq i \leq n-2$} \rangle.
    \end{equation*}
Diagrammatically, we can represent each $\sigma_i$ as the $n$-braid where the $i^{th}$ strand is crossed over the $(i+1)^{th}$ strand.

\begin{center}
\begin{figure}[H]
    \includegraphics[width = 80mm]{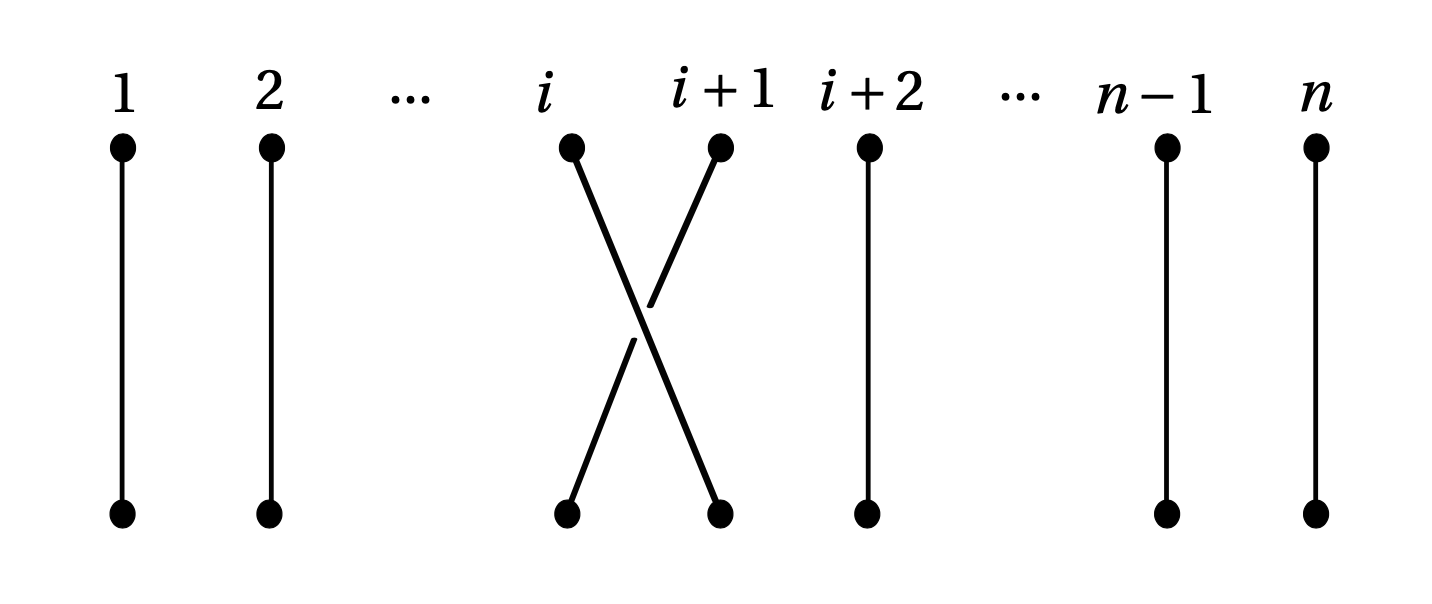}
    \caption{$\sigma_i \in B_n$}\label{braidclosure1}
    \vspace{-1cm}
    \end{figure}
\end{center}

\noindent The inverse of each generator, $\sigma_i^{-1}$,  consists of crossing the $(i+1)^{th}$ strand over the $i^{th}$ strand. A \textbf{\emph{braid word}} consists of multiple generators, which are read from left to right. We take the first generator and perform it individually, then we take the following generator and stack it underneath. To close a braid, connect each strand from the top, to its same position on the bottom on the right. 

\begin{center}
\begin{figure}[H]
\vspace{-1cm}
\includegraphics[scale=.5, width=65mm]{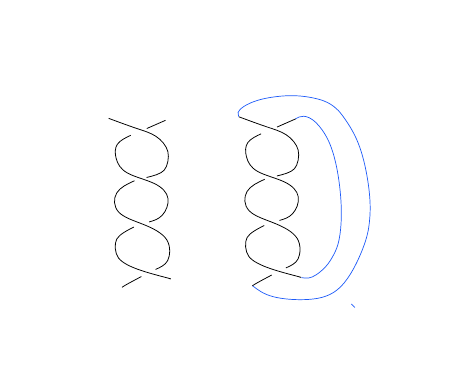}
\vspace{-1cm}\caption{the braid $\sigma_1^{4} \in B_2$ and its closure}
\end{figure}
\vspace{-1cm}
\end{center}

\begin{theorem}[Alexander's Theorem \cite{Ale23}]\label{alexandersthm}
Every knot or link can be represented as some closed braid.
\end{theorem}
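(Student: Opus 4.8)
The plan is to realize an arbitrary oriented link diagram as a closed braid by choosing a braid axis and then modifying the diagram until every strand winds monotonically around that axis. Fix an oriented diagram $D$ of the link $L$ in the plane, and pick a point $O$ in the complement of $D$, which we regard as the puncture through which the braid axis passes. Measuring angle in polar coordinates centered at $O$, orient the plane so that counterclockwise is the positive direction. Traversing $D$ along its orientation, each small arc either increases the angular coordinate around $O$ (a \emph{good} arc) or decreases it (a \emph{bad} arc). The crucial observation is that $D$ is already the closure of a braid with axis through $O$ precisely when $D$ has no bad arcs: in that case, cutting the plane along a ray emanating from $O$ unrolls $D$ into a collection of strands descending monotonically around the axis, and the crossings encountered read off a braid word whose closure is exactly $L$. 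Thus the goal reduces to eliminating all bad arcs by an isotopy of $L$.

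First I would set up Seifert's algorithm as bookkeeping: smoothing every crossing of $D$ compatibly with orientation produces disjoint Seifert circles, each inheriting a rotational sense around $O$, and the number of circles reversed with respect to $O$ (together with the number of bad arcs) records how far $D$ is from being a closed braid. Next I would describe the key device, \textbf{Alexander's braiding move}. Given a bad arc, subdivide it into sub-arcs short enough that each, together with $O$, spans a disk meeting no other part of the diagram. Each such sub-arc can then be pushed across the point $O$ to the far side: this replaces a clockwise-winding segment by a counterclockwise-winding one that wraps the other way around the axis, converting a bad arc into good arcs at the cost of new crossings that are compatible with the braid structure. Because the move is a planar isotopy followed by Reidemeister moves, the link type is preserved.

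The core of the argument is to exhibit a complexity measure that strictly decreases under each braiding move, so that after finitely many moves every arc is good. A natural choice is the number of bad arcs, refined if necessary by the number of reversed Seifert circles; one checks that a carefully chosen move removes a bad arc (or a reversed circle) without introducing new ones. Induction on this measure then yields a diagram $D'$ isotopic to $D$ with no bad arcs, which by the opening observation is a closed braid with closure $L$. Since $L$ was arbitrary, every knot and link is a closed braid.

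The main obstacle I anticipate is making the braiding move rigorous and confirming that it genuinely lowers the chosen measure. One must verify that pushing a sub-arc across $O$ does not inadvertently create new bad arcs elsewhere or new reversed Seifert circles, and that the subdivision can always be taken fine enough that each sub-arc is isotoped across $O$ without colliding with the remaining strands. Handling these case distinctions — especially choosing which arc to push so that the count provably drops — is where the real work lies; once the move is shown to decrease the measure, the terminating induction is routine.
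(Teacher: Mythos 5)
The paper does not prove this statement at all: it is quoted as a classical theorem with a citation to Alexander's 1923 paper, so there is no in-paper proof to compare against. Your sketch is, in outline, exactly Alexander's original argument (choose an axis point $O$, call an arc bad if it winds the wrong way around $O$, throw bad arcs across the axis, and induct on a decreasing count), so the approach is the right one.

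One concrete point in your write-up would fail as stated, however. You require that each sub-arc of a bad arc, ``together with $O$, spans a disk meeting no other part of the diagram.'' No amount of subdividing the bad arc can guarantee this: the spanning region is a long thin triangle reaching all the way to $O$, and it may cross arbitrarily many other strands no matter how short the sub-arc is. This clashes with your own (correct) remark that the move creates new crossings --- if the disk were empty, the push would be a planar isotopy producing no crossings at all. The actual device in Alexander's proof is that the sub-arc is thrown across $O$ passing entirely \emph{over} (or entirely \emph{under}) every strand it meets, in the third dimension; the new crossings are precisely these over- or under-passes, and the replaced arc winds positively around $O$, so the number of bad sub-arcs strictly decreases by one per move and no new bad arcs are created (the endpoints and the rest of the diagram are untouched, and the new arc is good by construction). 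With that correction your complexity measure (the count of bad sub-arcs alone suffices; the Seifert-circle refinement is unnecessary for this argument) strictly decreases, the induction terminates, and the remaining step --- reading a diagram with no bad arcs as a braid closure by cutting along a ray from $O$ --- is as you describe. So the skeleton is sound, but the one step you flagged as the anticipated obstacle is indeed where your formulation needs repair, and the repair is the over/under throwing trick rather than a fineness-of-subdivision argument.
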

\begin{center}
\begin{figure}[h]
\includegraphics[width = 70mm]{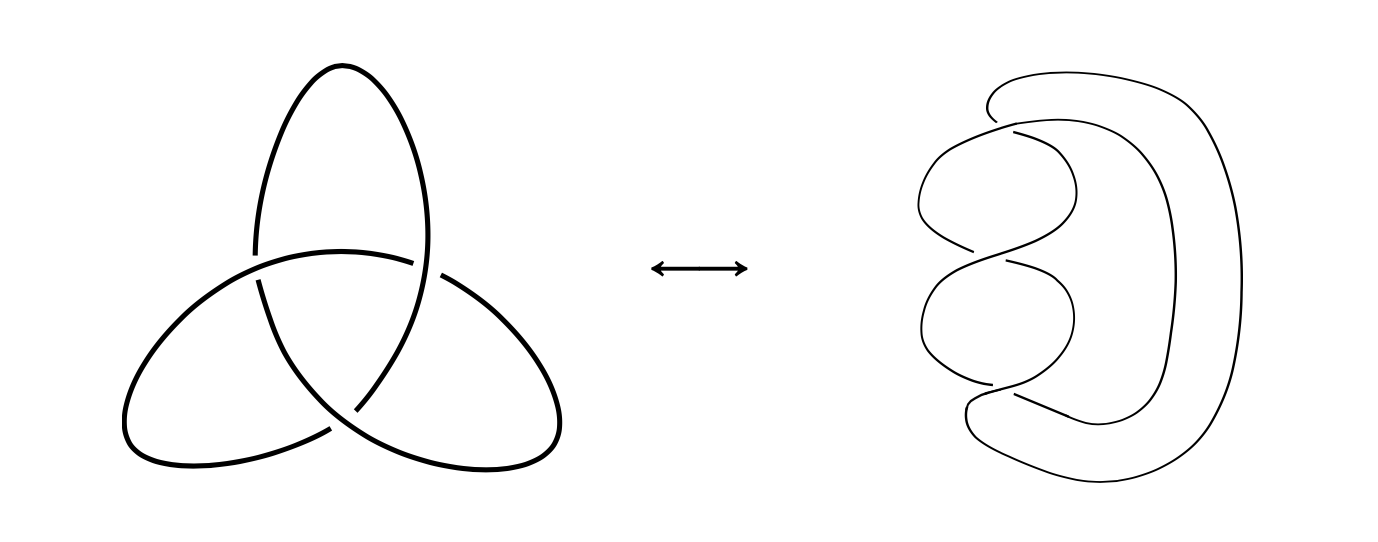}
\caption{the trefoil knot diagram and its equivalent closed braid. }
\end{figure}
\end{center}
\smallskip


\section{Knot Invariants}\label{knotinvariants}
Distinguishing knots from one another (up to isotopy) is a fundamental problem in knot theory. One approach to such a problem is to assign knots an isotopy invariant polynomial, which we call a \textbf{\emph{knot polynomial}}. These polynomials are typically defined in terms of skein relations. These relations are recursions that ``smooth" the knot by resolving or removing crossings.

\begin{center}
\begin{figure}[H]
\includegraphics[width = 75mm]{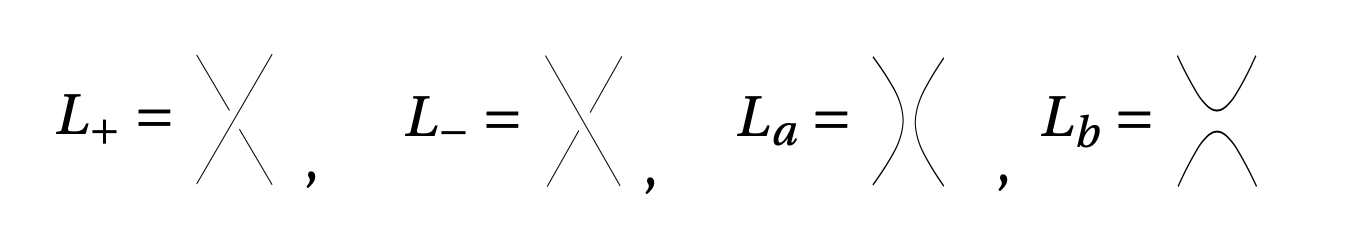}\\
\caption{positive crossing, negative crossing, and a $\&$ b smoothings, respectively}\label{crossingsdef}
\vspace{-1cm}
\end{figure}
\end{center}

For example, the \textbf{\emph{bracket polynomial}} for a link L, denoted $\langle L \rangle$, introduced by Kauffman \cite{Kau87}, obeys the following relations, where $\textrm{O}$ denotes the trivial knot without crossings (the \textit{unknot}): 
           \[\langle \textrm{O} \rangle = 1 \]
          \[\langle L_{+} \rangle = A \langle L_{a} \rangle + A^{-1} \langle L_{b} \rangle\]
         \[\langle \textrm{O} \sqcup L \rangle = (-A^2 - A^{-2}) \langle L \rangle \hspace{3mm} \text{\cite{Kau87}}.\] 
Note that the bracket polynomial is not an invariant because it does not discern the first Reidemeister move. For example, it cannot differentiate between an unknot and an unknot with a twist. 

Jones' groundbreaking research in subfactor theory led to the \textbf{Jones polynomial} \cite{Jon85}, which is defined in terms of the bracket polynomial and is an invariant. To define the Jones polynomial, we must first consider a link's writhe. Each positive and negative crossing in a link diagram is given a value of $+1$ and $-1$, respectively. Then, we define the \textbf{\emph{writhe}} as the sum of the crossings of the link, considering their signs. 
Thus, the Jones polynomial of a link is: \[J(L)=(-A^{-3})^{wr(L)} \langle L \rangle \hspace{3mm}\text{\cite{Jon85}}. \]
The term $(-A^{-3})^{wr(L)}$ is a correction term which resolves this deficiency in the bracket polynomial. 


\section{Graphs}\label{graphs}
A \textbf{\emph{graph}} is a collection of vertices that are connected by edges (or we can call edges \textbf{\emph{dimers}}). A graph comes with the data of a \textbf{\emph{vertex set}} $V$, and an \textbf{\emph{edge set}} $E$. Thus, we define a graph $G$ as tuple $G = (V,E)$. A \textbf{\emph{planar graph}} is a graph that can be topologically embedded into the plane. Not all graphs can be embedded into the plane, though proving this is non-trivial \cite{Kur30}. The \textbf{\emph{faces}} of a planar graph are the regions bounded by edges in the graph. Conventionally, we take the unbounded region on the outside of the graph to also be a face. The \textbf{\emph{dual}} $G^*$ of a graph $G$ has a vertex for every face of $G$, and edges for faces separated by an edge.\par
\par 
There are deep connections between knots, graphs, and these polynomial invariants. We will first consider those between knots and graphs. 

\begin{definition}[Signed crossings for checkerboard shaded links]
    For a link $L$ given a checkerboard coloring, we assign crossings to be either positive or negative using the following convention:
\end{definition}
\begin{center}
\includegraphics[scale=0.2]{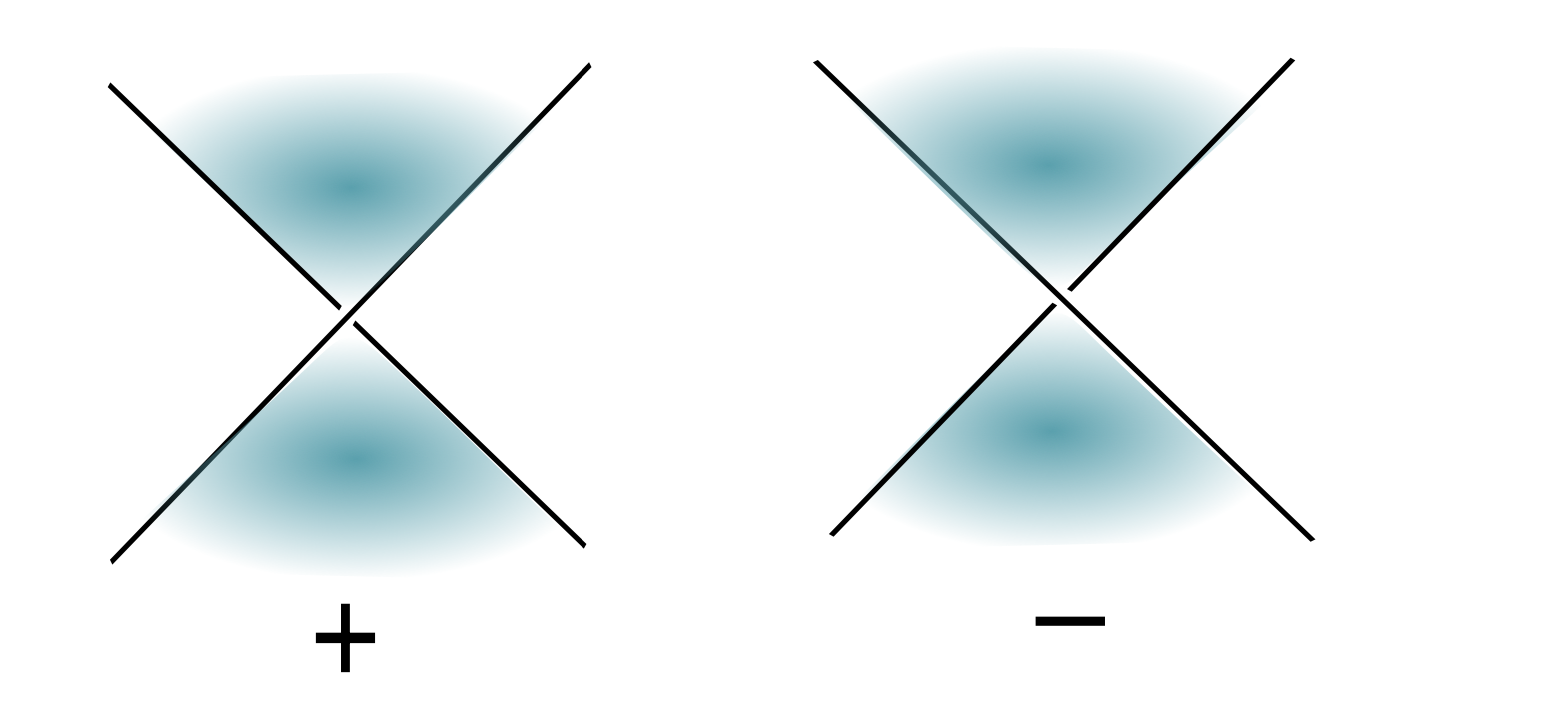}
\end{center}

\begin{definition}[Signed Tait graph \cite{Thi86}] \label{signedtait}
Let $D$ be a checkerboard colored link diagram. A \emph{Tait graph} $G = (V,E)$ has vertex set $V= \{\textrm{colored faces}\}$ and an edge between vertices $v,w\in V$ per face touched along a crossing. There is a choice of checkerboard coloring; the other choice gives us a Tait graph equal to $G^*$.  We sign the edges according to the sign of the crossing giving the \textit{signed Tait graphs}. For the rest of this paper choose the signed Tait graph to be the graph with the outside face not shaded and thus its dual will have the outside face shaded.
\end{definition}

\begin{example}\label{signedtaitgraphexample}
    Below is the construction of the signed Tait graph for the right-handed trefoil which is the closure of the braid $\sigma_1^{3}$:
\end{example}

\vspace{-0.3cm}
\begin{figure}[h!]
    \begin{center}
    \vspace{-0.5cm}
    \includegraphics[width = 60mm]{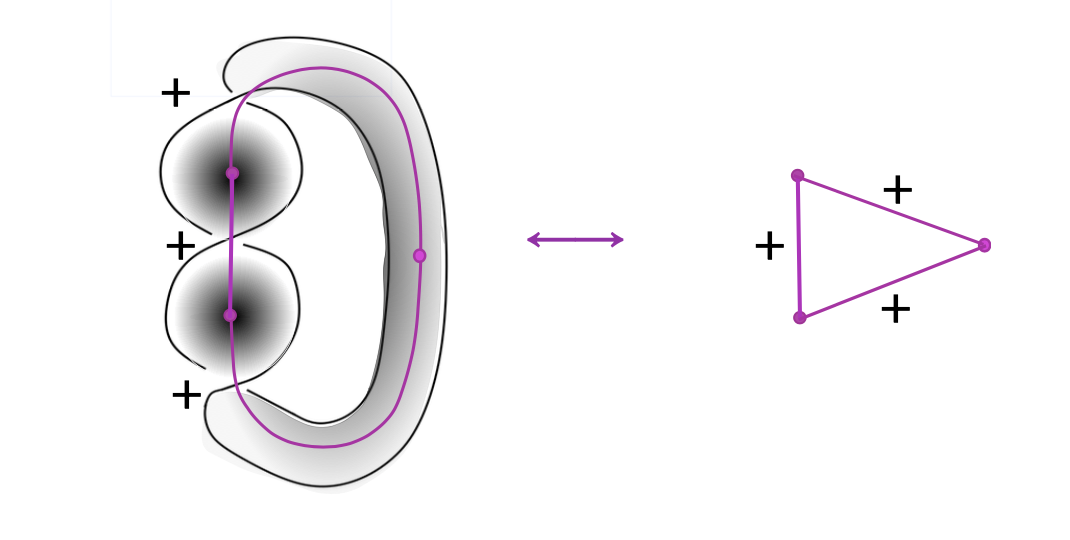}
   \caption{signed Tait graph of the trefoil.}
    \end{center}
\end{figure}

\vspace{-0.8cm}
\vspace{0.35cm}
\begin{definition}[Spanning tree]\label{spantree}
A \emph{spanning tree} is a minimal subgraph that contains every vertex.
\end{definition}

\begin{definition}[Activity letters of a signed Tait graph \cite{Coh14}]\label{activitysignedtait}
Given a signed Tait graph $G$, order the edges, and choose a spanning tree $T$ of $G$. We can assign activity letters $L, \ell, D, d, \overline{L}, \overline{\ell}, \overline{D}, \overline{d}$ to weight the ordered edges of the signed Tait graph $G$, given the spanning tree $T$. For positively signed $e\in T$ we give $e$ the weight $L$ if $e$ is the lowest ordered edge that reconnects $T-\{e\}$, and $D$ otherwise. For $e\notin T$, we give the weight $\ell$ if $e$ is the lowest ordered edge in the cycle $T\cup \{e\}$, and $d$ otherwise. For negatively signed $e$, replace every letter with the same, except with a bar, i.e. $L$ is replaced by $\overline{L}$.
\end{definition}

\begin{definition} [Activity word of a graph]
    Given a graph $G$ with edges weighted by activity letters, the \emph{activity word} of $G$ is the product of all the activity letters.
\end{definition}

\begin{example}\label{signedtait1}
    Using our example from figure 8 above, we can find the activity letter specializations for all of the weighted graphs coming from the spanning trees. Then we can find the activity words, $\prod_{e\in G_i}\mu_T$, for each tree, $1\leq i \leq 3$, to be $L^2d$, $LdD$, and $\ell D^2$, respectively.
\begin{center}
    \vspace{5mm}
\includegraphics[width = 80mm]{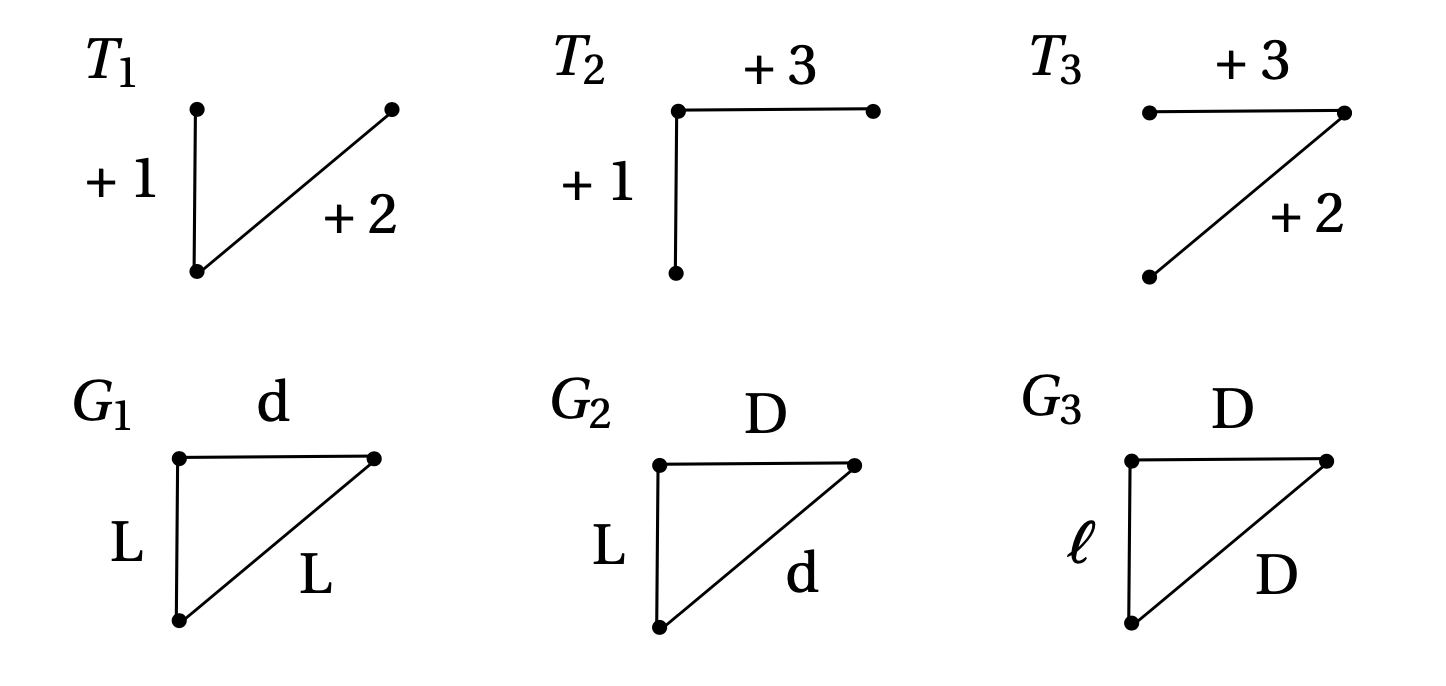}\\
\end{center}
\end{example}

\begin{definition}[Balanced overlaid Tait graph \cite{Coh14}\label{balancedoverlaidtait}] 
Given a checkerboard colored link diagram $D$, we can define a bipartite vertex set $V = V_1 \sqcup W$. Let $V_1$ be the set of crossings, $V_2$ the set of colored faces, $V_3$ the set of uncolored faces. We omit one vertex $v_2$ from $V_2$ and $v_3$ from $V_3$ such that the corresponding faces touch along a strand of $D$ (see Figure \ref{balancedoverlaidtaittrefoil}; the omitted faces touch along a common strand). Now, let $W = (V_2 \cup V_3)-\{v_2,v_3\}$. We say that $v \in V_1$ is adjacent to $w \in W$ if $v$ is a crossing that touches the face $w$. 
\end{definition}
As a matter of convention, let us choose our checkerboard coloring for the balanced overlaid Tait graph so that the outside face is not shaded. Notice that by eliminating two vertices corresponding to faces, by the Euler characteristic, the number of vertices in $V_1$ is the same as the number of vertices in $W$. This will allow perfect matchings to exist.

\begin{figure}[h]
\begin{center}
\includegraphics[width = 60mm]{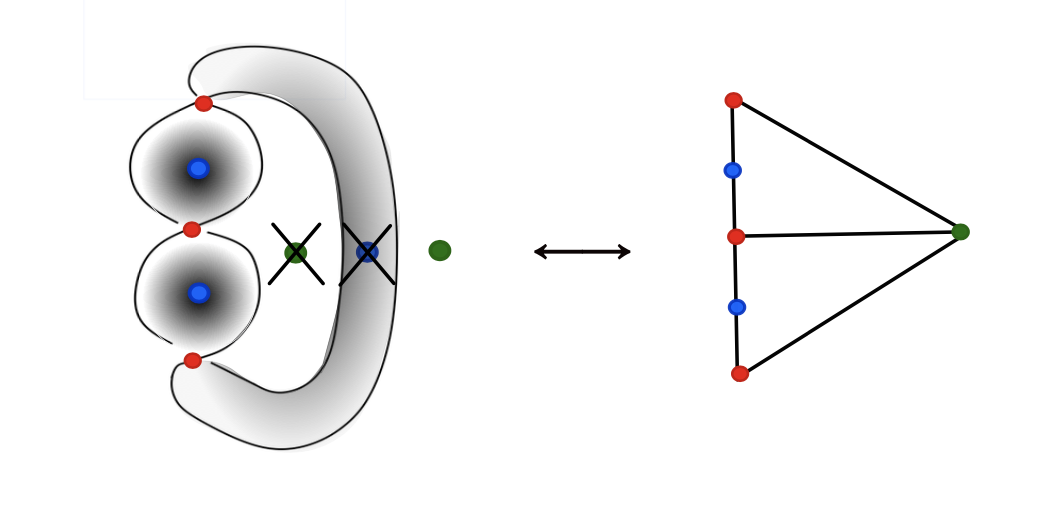}
\caption{balanced overlaid Tait graph of the right-handed trefoil}
\label{balancedoverlaidtaittrefoil}
\end{center}
\end{figure}

\begin{definition}[Activity letters of a balanced overlaid Tait graph \cite{Coh12}]\label{activitybalancedtait}
We can also assign activity letters to weight the edges of the balanced overlaid Tait graph $G$. Start by ordering the vertices in $V_1$. Let $w$ be a vertex corresponding to a shaded or unshaded face. So $w$ is adjacent to ordered vertices. If $w$ is corresponding to a shaded face, weight the edge incident to the lowest ordered adjacent vertex by $L$, and $D$ for every other edge. If $w$ is a vertex corresponding to an unshaded face, then weight the edge incident to the lowest ordered vertex by $\ell$, and $d$ for every other edge.  If some vertex $v\in V_1$ comes from a negative crossing, every weight of edges incident to $v$ picks up a bar i.e. $L$ is replaced by $\overline{L}$. 
\end{definition}

\begin{example}
    Using the balanced overlaid Tait graph of the trefoil given in Figure \ref{balancedoverlaidtaittrefoil}, we can find the activity letters to be:\\
    \begin{center}
\includegraphics[width = 60mm]{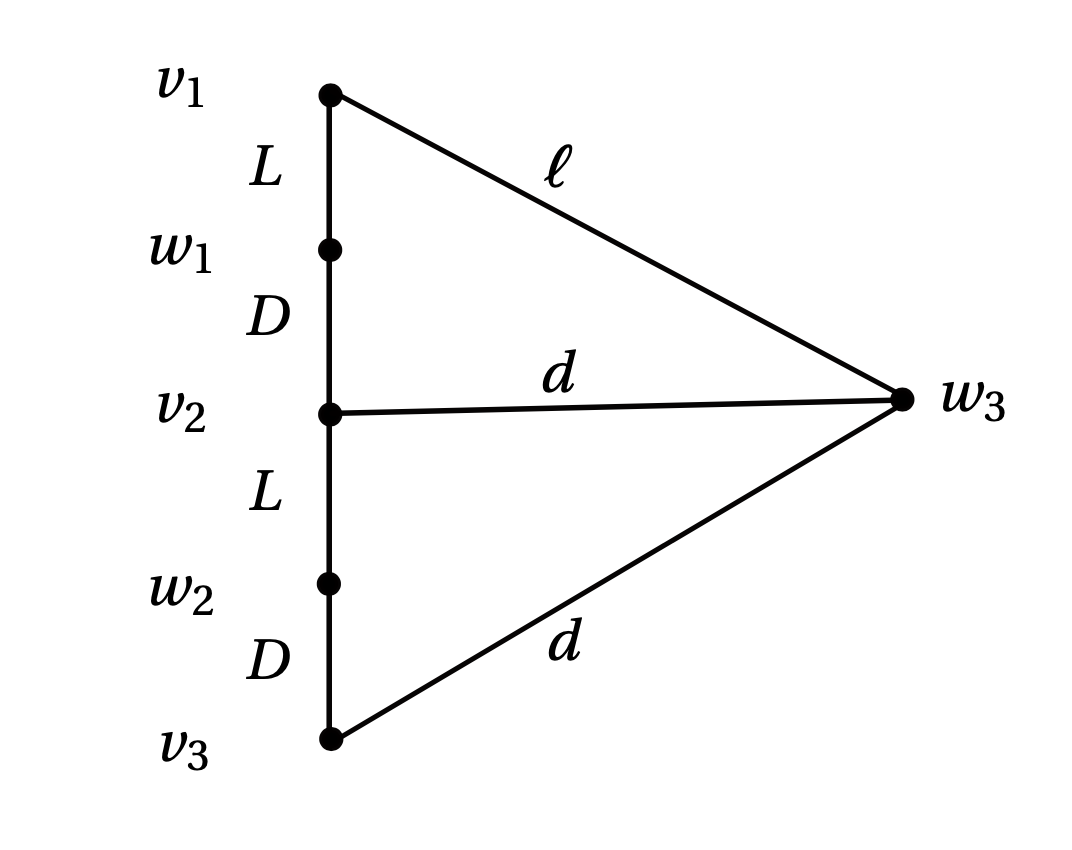}
\end{center}
\end{example}

By above, we have letters in $L, \ell, D, d$ assigned to the signed Tait graph, and the balanced overlaid Tait graph. We can create  polynomials from these letters to recover the Jones polynomial:

\begin{definition}[Activity letter specialization]\label{specialization}
Let $\eta(e)$ be the weight of an edge $e$ (in $L, \ell, D, d, \overline{L}, \overline{\ell}, \overline{D}, \overline{d}$) in either the balanced overlaid Tait graph, or signed Tait graph assigned to some link $L$. The \emph{activity letter specialization $\mu(e)$} is given by
\begin{center}
    
$\mu(e) =\begin{cases} 
     -A^{-3}, & \textrm{if} \ \eta(e) \in \{L, \overline{\ell}\}\\
     -A^3, & \textrm{if} \ \eta(e) \in \{\ell, \overline{L}\} \\
     A, & \textrm{if} \ \eta(e) \in \{D, \overline{d}\} \\
     A^{-1} & \textrm{if} \ \eta(e) \in \{d, \overline{D}\}
   \end{cases}
$
\end{center}

\end{definition}

\begin{theorem}{(Thistlethwaite \cite{Thi86})}\label{thistlewaitestuff}
\[J(L) = (-A^{-3})^{wr(L)}\sum\limits_{T} \prod\limits_{e \in G} \mu_T(e)\]
\\where $G$ is the signed Tait graph of a link $L$, $T$ is a spanning tree of $G$, $\mu_T(e)$ is the weight of $e$ associated to a spanning tree $T$.
\end{theorem}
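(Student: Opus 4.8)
The plan is to establish the spanning-tree formula for the Kauffman bracket first and then attach the writhe correction via $J(L)=(-A^{-3})^{wr(L)}\langle L\rangle$. I would begin from the state-sum form of the bracket: expanding every crossing through $\langle L_+\rangle = A\langle L_a\rangle + A^{-1}\langle L_b\rangle$ and collapsing disjoint loops with $\langle \mathrm{O}\sqcup L\rangle=(-A^2-A^{-2})\langle L\rangle$ yields
\[
\langle L\rangle \;=\; \sum_{S} A^{\,\alpha(S)-\beta(S)}\,\delta^{\,|S|-1},\qquad \delta:=-A^{2}-A^{-2},
\]
where $S$ ranges over the $2^{|E|}$ ways of smoothing each crossing, $\alpha(S),\beta(S)$ count the two smoothing types, and $|S|$ is the number of resulting loops. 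Using the checkerboard sign convention, I would identify each state $S$ with a spanning subgraph of the signed Tait graph $G$ of Definition~\ref{signedtait}: for a positive crossing the two smoothings correspond to keeping (contracting) or deleting the associated edge, and for a negative crossing these roles are interchanged. This recasts the state sum as a sum over edge subsets $S\subseteq E$.

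Next I would reorganize this subset sum by spanning trees using the classical activity (Crapo) interval partition. Once the edges are ordered as in Definition~\ref{activitysignedtait}, the Boolean lattice of subsets decomposes into disjoint intervals $[\,T\setminus \mathrm{IA}(T),\,T\cup \mathrm{EA}(T)\,]$ indexed by spanning trees $T$, where $\mathrm{IA}(T),\mathrm{EA}(T)$ are the internally and externally active edges (the lowest edge in each fundamental cocycle, respectively cycle)—precisely the edges receiving the letters $L,\ell$ (and their barred analogues). Inside the interval over $T$, the internally inactive edges are forced present and the externally inactive edges forced absent, while the active edges vary freely. Hence the interval's contribution factors as a product over edges: each inactive edge contributes a single smoothing weight $A^{\pm1}$, matching $\mu(D)=A$ and $\mu(d)=A^{-1}$ in Definition~\ref{specialization}, and each active edge contributes a two-term sum over its toggled states.

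The crux is a loop-counting lemma: toggling an active edge changes $|S|$ by exactly one, with the direction fixed by whether the edge is internal or external and by the crossing sign. Granting this, the two-term sum for an active edge collapses to a monomial; for a positive internally active edge,
\[
A\,\delta^{k}+A^{-1}\delta^{k+1} \;=\; \delta^{k}\bigl(A+A^{-1}\delta\bigr) \;=\; -A^{-3}\,\delta^{k},
\]
reproducing $\mu(L)=-A^{-3}$, and the externally active case gives $A^{-1}\delta^{k}+A\,\delta^{k+1}=-A^{3}\delta^{k}=\mu(\ell)$; the barred (negative-crossing) types follow by the symmetric computation. Multiplying the per-edge factors (and absorbing the global $\delta^{-1}$ normalization) gives $\langle L\rangle=\sum_{T}\prod_{e\in G}\mu_T(e)$, and the factor $(-A^{-3})^{wr(L)}$ then delivers the stated formula for $J(L)$.

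I expect the main obstacle to be the loop-counting lemma driving the collapse. Showing that an active edge toggles the loop number by precisely $\pm1$—and that the sign is correlated with the activity type and crossing sign so the monomials land on exactly $-A^{\mp3}$—requires a careful analysis of how smoothings interact with the planar (medial-graph) structure of $G$ and with the fundamental cycle/cocycle defining activity. Coordinating the signs across all eight letter types, while keeping the normalizing $-1$ in the exponent $|S|-1$ consistent through the factorization, is where the genuine work lies; the state-sum setup and the purely combinatorial interval partition are routine by comparison.
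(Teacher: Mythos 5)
The paper offers no proof of Theorem~\ref{thistlewaitestuff}: it is imported verbatim from Thistlethwaite \cite{Thi86} (and used as a black box in Sections~\ref{torusdimer} and \ref{hombraiddimer}), so there is no internal argument to compare against. Your outline is, in substance, the standard proof of this result in Kauffman's state-sum formulation --- identify bracket states with edge subsets of the signed Tait graph, partition the Boolean lattice into Crapo intervals $[\,T\setminus\mathrm{IA}(T),\,T\cup\mathrm{EA}(T)\,]$, fix the inactive edges, and collapse the free toggles of active edges into the monomials of Definition~\ref{specialization} --- and it is correct in structure, with the per-edge computations $A+A^{-1}\delta=-A^{-3}$ and $A^{-1}+A\delta=-A^{3}$ landing exactly on $\mu(L)$ and $\mu(\ell)$, and the barred letters obtained by swapping $A\leftrightarrow A^{-1}$ at negative crossings.

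The one step you flag as the genuine obstacle --- the loop-counting lemma --- is real but resolves more cleanly than you anticipate, and you should record the two observations that settle it. First, for a state given by an edge subset $F\subseteq E$ on vertex set $V$, the number of state circles equals $c(F)+n(F)=2c(F)+|F|-|V|$ (components plus nullity), so \emph{any} single edge toggle changes the loop count by exactly $\pm 1$ automatically; no medial-graph analysis is needed for that part. What requires proof is only that, within the interval over $T$, each active toggle moves the count \emph{up} by one independently of the other toggles, i.e.\ $\mathrm{loops}\bigl((T\setminus I')\cup E'\bigr)=1+|I'|+|E'|$ for $I'\subseteq\mathrm{IA}(T)$, $E'\subseteq\mathrm{EA}(T)$. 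This follows from activity itself: if an internally active $e$ lay on the fundamental cycle of an externally active $f$, then minimality of $e$ in its fundamental cocycle forces $e<f$ while minimality of $f$ in its fundamental cycle forces $f<e$, a contradiction. Hence the fundamental cycles of edges in $E'$ avoid all of $\mathrm{IA}(T)$; since the cycle space of $T\cup E'$ is spanned by those fundamental cycles, every internally active edge remains a bridge throughout its interval, and every externally active edge closes a cycle even after $I'$ is deleted. This gives the independence of toggles, the factorization of each interval's contribution, and the uniform sign correlation across all eight letter types that worried you. The only remaining bookkeeping is the shading convention matching ``edge present'' at a positive crossing to the $A$-smoothing (reversed at negative crossings), which is a one-time check of Definition~\ref{signedtait} against Figure~\ref{crossingsdef}. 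So: no gap in the approach, and the crux you isolated is provable by exactly this cycle/cocycle orthogonality argument.
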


\begin{example}
    From Example \ref{signedtait1} we can get the activity letters specializations for $G_1$  are $(-A)^{-3}$, $(-A)^{-3}$, and $A^{-1}$, for $G_2$ are $(-A)^{-3}, A^{-1},$ and $A$, and for $G_3$ are $(-A)^{3}, A$ and $A$. This gives that the polynomial associated with the signed Tait graph of the right-handed trefoil will be $(-A)^{-3}((-A^{-3})^2A^{-1}+(-A^{-3})AA^{-1}+(-A^3)A^2)=A^{-4}+A^{-12}-A^{-16}$ which is indeed the Jones polynomial for the right-handed trefoil!
\end{example}

\begin{definition}[Perfect matching, dimer covering]\label{perfectmatching}

A \emph{perfect matching} of a graph $G=(V,E)$ is a subset $P \subset E$ such that every vertex $v\in V$ is incident to exactly one $e\in P$. Equivalently, we can call this a \emph{dimer covering}.
\end{definition}

\begin{example}\label{signedperfectmatchings}
    The perfect matchings of the balanced overlaid Tait graph with activity letter specialization for the right-handed trefoil are shown below. 
\smallskip    
\smallskip   
    \begin{center}
\includegraphics[width = 80mm]{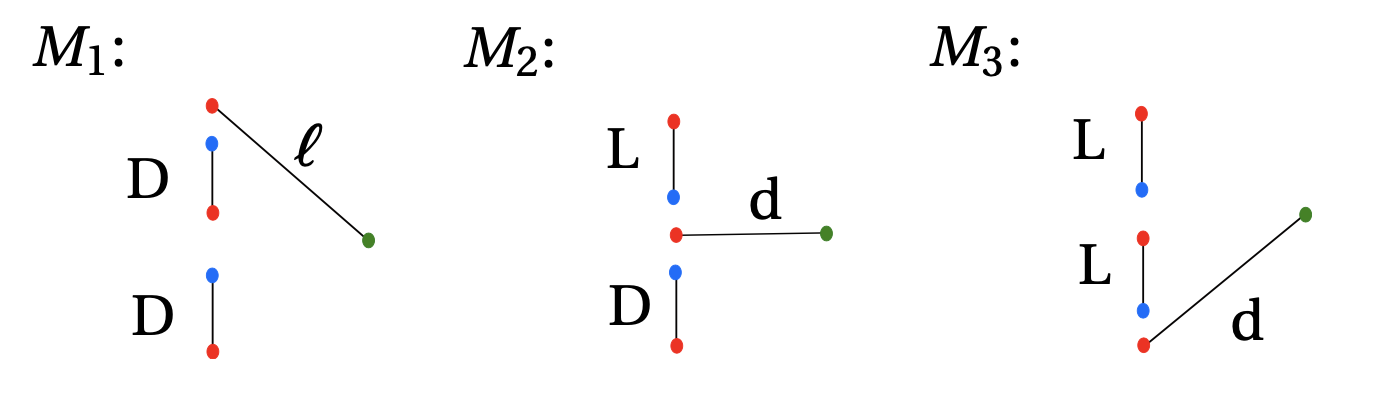}
\end{center}

\end{example}

\begin{definition}[Dimer model]\label{dimermodel}
For an oriented link diagram $L$, let $P$ denote a perfect matching of its balanced overlaid Tait graph (with some ordering of vertices), and $\mu_P(e)$ be the weight of $e$. We say that $L$ admits a \emph{dimer model} if the partition function

\[
    Z(L)=\sum\limits_{P} \prod\limits_{e \in P} \mu_P(e)
\]
\\
equals the bracket polynomial for $L$, $\langle L \rangle$.

\end{definition}

\begin{example}
    The right-handed trefoil admits a dimer model. We can find the activity words with the activity letter specializations corresponding to the each of the perfect matchings of Example \ref{signedperfectmatchings} to be $(-A^3)A^2$, $(-A^{-3})AA^{-1}$, and $(-A^{-3})^2A^{-1}$ for $M_1$, $M_2$, and $M_3$, respectively. Therefore, multiplying by the correction term of $(-A^{-3})^{3}$ gives the polynomial $A^{-4}+A^{-12}-A^{-16}$, which matches the polynomial from the spanning tree method and is thus the Jones polynomial of the right-handed trefoil!
\end{example}

Next we describe how the perfect matching method is indeed a determinant formula. 

\begin{definition}[Modified Adjacency Matrix]\label{modifiedadjacencymat}
We define a modified adjacency matrix $A_G$ of a balanced overlaid Tait graph $G = (V_1\sqcup W, E)$ as $A_G = (a_{v_1,w})$, for $v_1\in V_1, w\in W$, where $A$ has rows labeled by the elements of $V_1$, and columns labeled by elements of $W$. Let $\eta(e_{v_1,w})$ be the weight of the edge connecting $v_1$ and $w$. We have 
\begin{center}
    
$a_{v_{1},w} = \begin{cases} 
     \eta(e_{v_1,w}), & \textrm{if} \ v_1 \ \textrm{adjacent to} \ w \\
     0, & \textrm{otherwise} 
   \end{cases}
$
\end{center}
\end{definition}

\begin{definition}[Kasteleyn Weighting \cite{Coh14}]\label{kasteleynweights}
A \emph{Kasteleyn weighting} is a choice of sign on the edges of a bipartite graph such that the number of negative edges around a face is 
\begin{center}
\begin{align*}
\begin{cases} 
     \textrm{odd}  &  \textrm{if face has length} \ 0 \mod 4 \\
     \textrm{even} & \textrm{if face has length} \ 2 \mod 4
\end{cases}
\end{align*}
\end{center}
\end{definition}
\begin{theorem}[Cohen \cite{Coh14}]\label{cohenkauffmantrick}
Kauffman's trick (see figure below) provides a Kasteleyn weighting for the balanced overlaid Tait graph of an oriented link diagram.\end{theorem}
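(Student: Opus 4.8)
The plan is to reduce the statement to a purely local, face-by-face verification, after first pinning down the planar structure of the balanced overlaid Tait graph $G = (V_1 \sqcup W, E)$. Viewing $G$ as the overlay of the Tait graph and its dual, with each crossing subdivided into a four-valent vertex, one sees that every corner of the diagram (a vertex of the Tait graph together with an adjacent face, read off between two consecutive edges) contributes exactly one face of $G$, and that this face is a $4$-cycle $v_1 \to w \to v_1' \to w'\to v_1$ alternating between two crossings $v_1,v_1' \in V_1$ and one shaded and one unshaded face vertex in $W$. A quick Euler-characteristic count confirms that these corner quadrilaterals account for all faces. Hence, away from the two omitted vertices $v_2,v_3$, every bounded face of $G$ has length $4$, i.e. length $\equiv 0 \bmod 4$, so by Definition \ref{kasteleynweights} the Kasteleyn condition at each such face is exactly that it be bordered by an odd number of negatively signed edges. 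The deletion of $v_2$ and $v_3$ merges the quadrilaterals surrounding them into a single region, which I would treat as the distinguished outer face; since Kasteleyn's criterion only constrains bounded faces, this region needs no separate check.

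Next I would read off Kauffman's trick (the figure) as a local rule that, at each crossing $v_1 \in V_1$, assigns a sign in $\{+,-\}$ to each of the four incident edges, the assignment depending only on the type of the crossing, namely its sign (recorded by the orientation) together with the placement of the shaded and unshaded faces around it. The advantage of a local rule is that each quadrilateral face meets exactly two crossings $c,c'$ and borrows precisely two cyclically consecutive incident edges from each. Thus the number of negative edges around the face splits as the contribution from $c$ plus the contribution from $c'$, and the required Kasteleyn condition becomes the statement that these two local counts have opposite parity.

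The core of the argument is then a finite case analysis: for each admissible configuration of two crossings sharing a corner, compatible with the checkerboard coloring and the induced orientation, I would verify that Kauffman's rule forces exactly one of the two crossings to donate an odd number of negative edges to the shared quadrilateral, so that the total is odd. Because the two crossings bounding a corner are joined by the two arcs of that corner, the relative positions of their shaded and unshaded regions are tightly constrained, which collapses the enumeration to a short list; I would organize it by crossing sign and by whether the shared shaded and unshaded vertices lie on the over- or the under-strand.

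I expect the main obstacle to be bookkeeping rather than depth: fixing conventions (the cyclic order of edges around a crossing, which smoothing carries which label, and the precise output of the figure) so that the rule applied independently at every crossing produces the correct global parity on all shared faces simultaneously, and checking that the degenerate configurations — multiple edges producing short faces, and faces incident to the omitted $v_2,v_3$ — either do not occur among the bounded faces or collapse to the generic quadrilateral case. Once the conventions are fixed, the verification is a routine, if somewhat tedious, enumeration.
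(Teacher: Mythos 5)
First, a point of calibration: the paper you are working from contains no proof of this statement at all --- Theorem \ref{cohenkauffmantrick} is imported verbatim from Cohen--Dasbach--Russell \cite{Coh14}, with only the figure as commentary. So your attempt can only be compared against the argument in the cited source, and there your structural setup is essentially the right (and the same) one: the balanced overlaid Tait graph is the vertex--face incidence graph of the underlying four-valent diagram, its faces (on the sphere) are quadrilaterals in bijection with the strands of the diagram --- equivalently with the corners of the Tait graph, as you say --- each alternating between two crossings of $V_1$ and one shaded and one unshaded vertex of $W$, and an Euler count shows these are all the faces. With Definition \ref{kasteleynweights}, the Kasteleyn condition then reduces to ``an odd number of negative edges on each quadrilateral,'' to be checked locally from the rule in the figure. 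That reduction is the correct skeleton of the proof.

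Two caveats, one of which is a genuine gap. First, your handling of the deleted vertices is asserted rather than justified: under the paper's conventions the two omitted faces are bounded faces of the diagram (see Lemma \ref{balancedtaitgraph2q}, where two interior faces are deleted), so the merged region is a bounded face of the embedding inherited from the plane, and you cannot simply declare it exempt. What rescues the step is standard but should be said: either re-embed on the sphere and choose the merged region as the unbounded face (Kasteleyn's criterion may be imposed relative to any choice of outer face), or note that exactly two vertices --- an even number --- were removed from the interior of that region, so the enclosed-vertex parity that drives Kasteleyn's sign argument is unchanged for every cycle around it. Second, and more seriously, your proof stops exactly where the content of the theorem begins: the finite case analysis is described but never executed, and since the statement is precisely that the particular sign rule in the figure produces odd parity on every strand-quadrilateral, a proof that defers this verification has established only the (correct) reduction, not the theorem. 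The enumeration itself is short --- organize it per strand, by the crossing signs and orientations at its two endpoints, exactly as you propose --- but until the figure's conventions are fixed and the cases are run, nothing certifies that the parities come out odd; note too that consistency is tighter than your ``opposite parities'' framing suggests, since each negative edge is shared by two quadrilaterals, so the per-crossing rule must balance globally, not just on one face at a time. Degenerate faces from kinks or nugatory crossings should be excluded by restricting to reduced diagrams, which the braid closures considered in this paper are. In short: right architecture, faithful to \cite{Coh14}, but at the decisive step it is a plan rather than a proof.
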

\begin{figure}[h]
    \centering
    \includegraphics[scale=0.7, width=75mm]{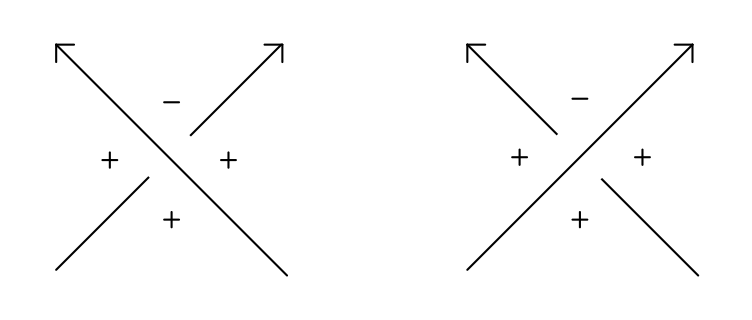}
    \caption{Kauffman's trick for the balanced overlaid Tait graph}
    \label{fig:Kauffman's trick for signing the balanced overlaid Tait graph}
\end{figure}

\begin{theorem}[Cohen \cite{Coh14}]\label{cohenkasteleyn}
If a balanced bipartite graph $G$ is given a Kasteleyn weighting, the determinant of the modified adjacency matrix gives the sum of the perfect matchings of $G$ up to a sign.
\end{theorem}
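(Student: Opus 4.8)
The plan is to read the perfect matchings directly off the permutation expansion of the determinant, and then to show that the Kasteleyn weighting forces every matching to enter that expansion with one and the same overall sign. Write $G = (V_1 \sqcup W, E)$ with $\abs{V_1} = \abs{W} = n$, and let $s(e) \in \{\pm 1\}$ denote the Kasteleyn sign carried by the entry of $A_G$ indexed by an edge $e$, with $w(e)$ the underlying (unsigned) weight. Expanding,
\[
\det A_G = \sum_{\sigma \in S_n} \operatorname{sgn}(\sigma) \prod_{i \in V_1} a_{i,\sigma(i)},
\]
a term is nonzero exactly when $i$ is adjacent to $\sigma(i)$ for every $i$, i.e. exactly when $\{e_{i,\sigma(i)}\}$ is a perfect matching $P$. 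Writing $\sigma_P$ for the associated permutation, this gives
\[
\det A_G = \sum_{P} \operatorname{sgn}(\sigma_P) \Bigl( \prod_{e \in P} s(e) \Bigr) \prod_{e \in P} w(e),
\]
so it suffices to prove that the prefactor $\epsilon(P) := \operatorname{sgn}(\sigma_P) \prod_{e \in P} s(e)$ is independent of $P$.

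To compare two matchings $P$ and $P'$, I would pass to their symmetric difference $P \triangle P'$, which, since $G$ is bipartite, is a disjoint union of even alternating cycles. Edges common to $P$ and $P'$ contribute $s(e)^2 = 1$ and fix the permutation, so the ratio $\epsilon(P)/\epsilon(P')$ factors as a product over the cycles $C \subset P \triangle P'$: a cycle of length $2k_C$ contributes the sign of a $k_C$-cycle, namely $(-1)^{k_C - 1}$, times $\prod_{e \in C} s(e)$. The crux is therefore to evaluate $\prod_{e \in C} s(e)$ from the Kasteleyn condition.

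For this I would establish the standard Kasteleyn cycle lemma. Summing the defining face conditions over all bounded faces enclosed by $C$, each interior edge is counted twice and each edge of $C$ once; after an application of Euler's formula to the disk bounded by $C$ one obtains
\[
n_-(C) \equiv \tfrac{1}{2}\abs{C} + V_{\mathrm{int}}(C) + 1 \pmod 2,
\]
where $n_-(C)$ is the number of negative edges on $C$ and $V_{\mathrm{int}}(C)$ the number of vertices strictly enclosed by $C$. It then remains to note that $V_{\mathrm{int}}(C)$ is even: by planarity no edge of $P$ can cross $C$, and every vertex of $C$ is already matched by the $P$-edge lying on $C$, so $P$ must match the interior vertices among themselves, forcing $V_{\mathrm{int}}(C)$ even. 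Hence $\prod_{e \in C} s(e) = (-1)^{k_C + 1}$, and the total contribution of the cycle is $(-1)^{k_C - 1}(-1)^{k_C + 1} = 1$. Thus $\epsilon(P) = \epsilon(P')$ for all $P, P'$, and factoring the common sign $\epsilon$ out of the sum gives $\det A_G = \epsilon \sum_{P} \prod_{e \in P} w(e)$, which is the sum over perfect matchings up to a sign.

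The main obstacle is the cycle lemma itself: correctly bookkeeping the parities when $C$ encloses a nontrivial sub-complex of interior vertices, edges, and faces (possibly including further alternating cycles), and confirming the Euler-characteristic count in that generality. The planarity-plus-matching argument for the parity of $V_{\mathrm{int}}(C)$ is what ties the purely combinatorial Kasteleyn identity to the geometry of matchings; it is the step most easily mishandled, so particular care is needed to verify that interior vertices genuinely cannot be matched to vertices lying on $C$.
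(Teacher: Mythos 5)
The paper does not actually prove this theorem: it is imported from Cohen \cite{Coh14} as a black box, so there is no internal proof to compare against line by line. Your argument is correct, and it is the classical Kasteleyn determinant argument, which is also the route taken in the cited source: expand $\det A_G$ over permutations so that nonzero terms biject with perfect matchings, compare two matchings along the even alternating cycles of $P \triangle P'$, and cancel the permutation-sign contribution $(-1)^{k_C-1}$ of each $2k_C$-cycle against the edge-sign product $\prod_{e \in C} s(e) = (-1)^{k_C+1}$ supplied by the cycle lemma $n_-(C) \equiv k_C + V_{\mathrm{int}}(C) + 1 \pmod 2$. The two points you flag as delicate are in fact handled correctly in your sketch: the face-summation computation is right (interior edges cancel mod $2$, and Euler's formula for the closed disk gives $F_{\mathrm{int}} = 1 + E_{\mathrm{int}} - V_{\mathrm{int}}$, yielding the stated congruence), and the parity of $V_{\mathrm{int}}(C)$ is even because every vertex of $C$ is saturated by the $P$-edge of $C$ through it, so $P$ restricted to the strict interior is a perfect matching of the interior vertices; this argument survives nesting, since the $P$-edges of any cycle of $P \triangle P'$ lying inside $C$ are themselves interior edges. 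The only implicit hypothesis worth making explicit is that the Kasteleyn condition refers to faces, so $G$ must come with a planar embedding (automatic for balanced overlaid Tait graphs) and the face conditions must be summed over all bounded faces of $G$ inside $C$, which tile the disk precisely because $C$ is a simple cycle of the embedded graph --- a fact guaranteed here since each vertex of $P \cup P'$ has degree at most two.
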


\begin{example}
    When giving the oriented right-handed trefoil a Kasteleyn weighting, the modified adjacency matrix becomes
    $\begin{bmatrix} L & 0 & \ell \\ -D & L & d \\ 0 & -D & d \end{bmatrix}$. Taking the determinant gives $L^2d+dDL+\ell D^2$, which is the sum of the perfect matchings found in example \ref{signedperfectmatchings}. 
\end{example} 

Theorem \ref{thistlewaitestuff}, \ref{cohenkauffmantrick} and \ref{cohenkasteleyn} tells us that we can always recover a determinant formula for the Jones polynomial when the words from perfect matchings agree with the words from spanning trees.
In particular it allows us to focus solely on the combinatorics of perfect matchings and spanning trees, as this proves to be more concrete than comparing a matrix to spanning trees. Further, Theorem \ref{cohenkasteleyn} tells us that computing the sum of the perfect matchings is a determinant formula and thus can be computed in polynomial time.

\section{$(2,q)$ Torus Knots Have a Dimer Model}\label{torusdimer}

The goal of this section is to show that a class of knots, called $(2,q)$ torus knots, admit a dimer model. Since $(2,q)$ torus knots are a special case of pretzel knots, this is a known result of Cohen \cite{Coh12}. However, we provide a separate proof as the explicit formulas and graph constructions were not given for these class of knots and are needed in future sections.

\begin{definition}[Torus link, torus knot]
    The $(p,q)$ \emph{torus link} is the closure of the braid $\left(\prod_{i=1}^{p-1}\sigma_i\right)^q$ in $B_p$. If $p$ and $q$ are coprime then the closure of $\left(\prod_{i=1}^{p-1}\sigma_i\right)^q$ forms a \emph{torus knot}.
\end{definition}

\begin{remark}
   The sum of all the activity words associated to a knot is the same for its Tait and dual Tait graph. 
\end{remark}

\begin{lemma}\label{taitgraph2qknot}
    The Tait graph of a $(2,q)$ torus link is of the form: 

\begin{center}
 \includegraphics[width = 50mm]{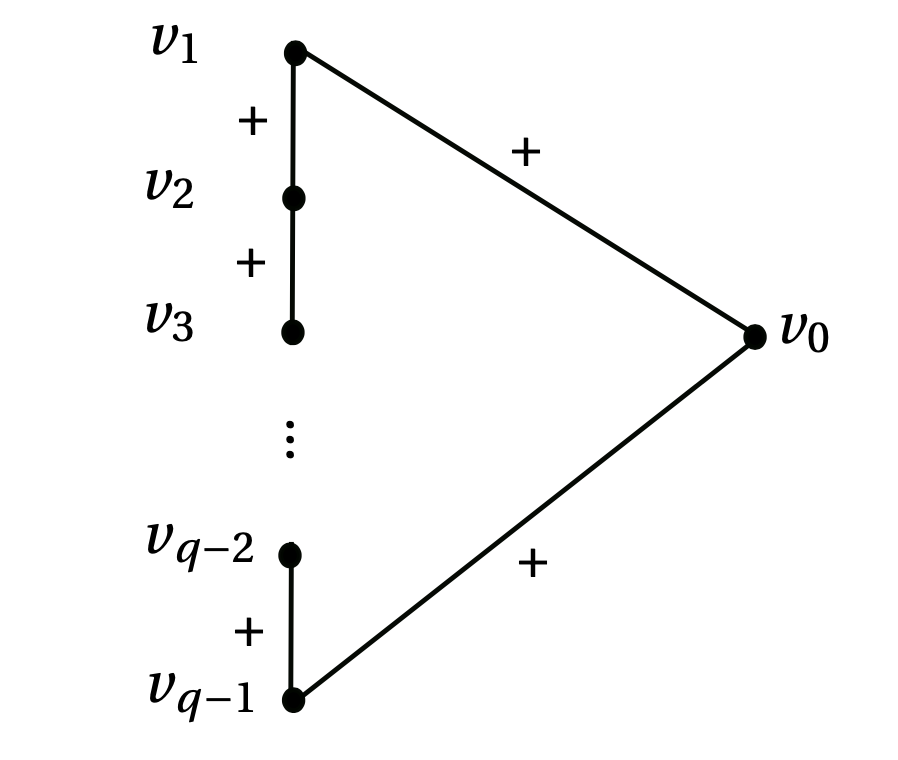}
    \end{center}
    when $q>1$ where the $v_i$ correspond to faces from top to bottom in the column for $1\leq i \leq q-1$ and $v_0$ corresponds to the right-most shaded face. When $q=2$ and $q=1$ the signed Tait graphs are of the form:
    \vspace{0.2cm}
    \begin{center}{\includegraphics[width = 80mm]{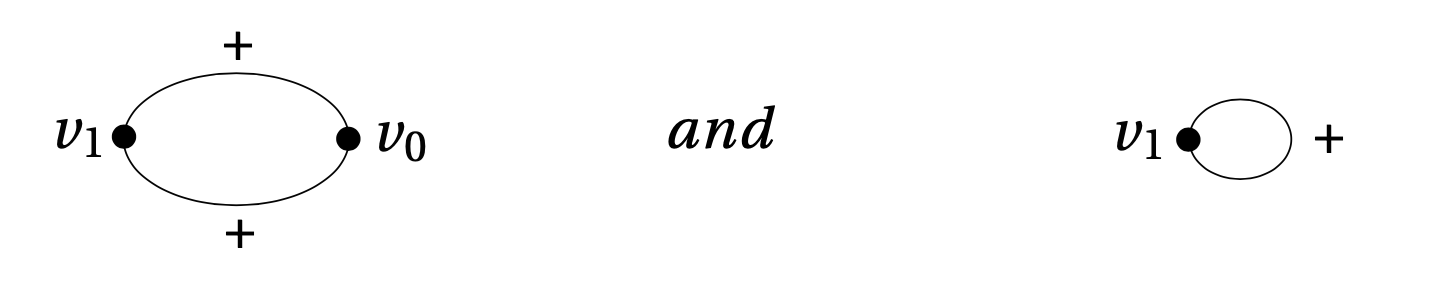}}
    \end{center}
    \noindent respectively, where for $q=2$ $v_1$ corresponds to the face on the left and for $q=1$, $v_1$ corresponds to the single shaded face.     
\end{lemma}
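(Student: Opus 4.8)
The plan is to read the signed Tait graph directly off the standard diagram of the $(2,q)$ torus link --- the closure of $\sigma_1^{q}\in B_2$ --- and then confirm its structure by a face count together with a short induction. First I would fix the standard picture: $q$ crossings stacked in a single vertical twist region, closed up by two arcs on the right as in the paper's convention. The underlying projection is a $4$-valent planar graph with $V=q$ crossings and $E=2q$ edges, so Euler's formula $V-E+F=2$ forces $F=q+2$ faces. This count is the backbone of the argument, since it pins down exactly how many shaded and unshaded faces can occur.

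Next I would identify the faces and the checkerboard coloring. The $q$ crossings of the twist region bound $q-1$ bigons running down the column; I call these $v_1,\dots,v_{q-1}$ from top to bottom. The two closure arcs on the right bound one further bigon-shaped region, which I call $v_0$, so that the whole diagram is a cyclic ``necklace'' of $q$ crossings separating the $q$ bigons $v_0,v_1,\dots,v_{q-1}$; the two remaining faces lie on the two sides of this necklace (one bounded, one unbounded). Choosing the checkerboard coloring with the outer face unshaded, the two \emph{opposite} faces at every crossing are monochromatic, so all $q$ bigons receive the same shade while the two side faces are the only unshaded faces. This exhausts the $q+2$ faces and shows that the vertex set of the Tait graph is exactly $\{v_0,\dots,v_{q-1}\}$.

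Then I would read off the edges. By Definition \ref{signedtait} each crossing contributes one edge joining the two shaded faces it touches. Crossing $k$ with $2\le k\le q-1$ sits between $v_{k-1}$ and $v_k$, giving the path edges $v_1v_2,\dots,v_{q-2}v_{q-1}$; the top crossing touches $v_0$ and $v_1$, and the bottom crossing touches $v_{q-1}$ and $v_0$, closing the path into the cycle $v_0v_1\cdots v_{q-1}v_0$. Since $\sigma_1^{q}$ is homogeneous, all $q$ crossings share the same sign, so every edge carries the same sign, and this produces exactly the pictured graph for $q\ge 3$. The degenerate cases are then immediate: for $q=2$ the single column bigon $v_1$ and the closure bigon $v_0$ are each touched by both crossings, giving two parallel edges, and for $q=1$ the lone crossing meets a single shaded face $v_1$ on two opposite corners, giving one self-loop.

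The point needing the most care is the claim that the closure genuinely completes the twist column into a cycle, i.e.\ that its two arcs bound a \emph{single} extra shaded bigon $v_0$ adjacent to both the top and the bottom crossing, rather than two separate faces or a face of the wrong color. To make this rigorous I would induct on $q$: passing from $(2,q)$ to $(2,q+1)$ appends one crossing at the bottom of the twist region, which carves a new column bigon $v_q$ out of the region formerly adjacent to $v_0$ and makes the new bottom crossing incident to $v_q$ and $v_0$. This subdivides the edge of the cycle running from $v_{q-1}$ to $v_0$ into a two-edge path through $v_q$, turning the $q$-cycle into the $(q+1)$-cycle and preserving all the incidences above. The base case is the single kinked unknot ($q=1$) with its one shaded face and loop, and the whole description is consistent with the trefoil ($q=3$) of Example \ref{signedtait1}, whose Tait graph is the triangle on three vertices.
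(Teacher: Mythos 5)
Your proof is correct, but it reaches the lemma by a partly different route than the paper's. The paper's proof is a pure induction on $q$: the base cases $q=1,2,3$ are checked by hand, and the inductive step inserts the extra crossing of $\tau_{q+1}$ between the first and second crossings of $\tau_q$, analyzing the local picture (their Figure of the crossing neighborhood) to conclude that exactly one new shaded face, hence one new vertex, appears and that it subdivides an edge of the cycle, with all edges positive. Your argument instead identifies the entire Tait graph directly from the standard diagram: the Euler-characteristic count $F=q+2$ certifies that the $q$ necklace bigons and the two side faces you exhibit exhaust all faces (a global exhaustiveness step the paper never makes explicit), the checkerboard parity of opposite corners at each crossing forces all bigons to carry one shade and both side faces the other, and each crossing then visibly contributes one edge of the $q$-cycle $v_0v_1\cdots v_{q-1}v_0$. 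You correctly isolated the one delicate point --- that the closure arcs bound a \emph{single} shaded face $v_0$ meeting both the top and bottom crossings --- and your supplementary induction resolving it is the same mechanism as the paper's inductive step, merely with the new crossing appended at the bottom of the twist region (subdividing the edge $v_{q-1}v_0$ through the new vertex $v_q$) rather than inserted between the first two crossings; the local shading analysis is identical in either placement. Your degenerate cases ($q=2$ giving two parallel edges, $q=1$ giving a single loop at the lone shaded face) match the paper's base cases. Net comparison: the paper's route is shorter but leans entirely on the inductive local picture, while your face count makes the global structure of the diagram rigorous on its own, with the induction serving only to certify the closure region; both are sound, and yours is arguably more self-contained.
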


\begin{proof}
    We prove by induction on $q$. Define $\tau_q$ to be the $(2,q)$ torus knot. When $q=1$, then $\tau_1$ is the closure of $\sigma_1$. There is only one face shaded and thus only one vertex. There is exactly one crossing which corresponds to exactly one edge. The crossing is positive, so the edge of the Tait graph is given a positive weight. The Tait graph of $\tau_2$ will have two vertices corresponding to the two faces. There are two crossing giving that these two vertices are connected by two edges. All crossings are positive so the edges are weighted with $+$s. The Tait graph for $\tau_3$ was constructed in Example \ref{signedtaitgraphexample}. 

    Assume that the result is true up to some $q\geq 3$. Consider $\tau_{q+1}$, the closure of the braid $\left(\sigma_1\right)^{q+1}$. Assume that the new crossing for $\tau_{q+1}$ is placed between the first and second crossing. Thus, in a local neighborhood around the first crossing in $\tau_q$, which appears as Figure (\ref{neighborhoodcrossing}) below:

    \begin{figure}[H]
    \includegraphics[width = 80mm]{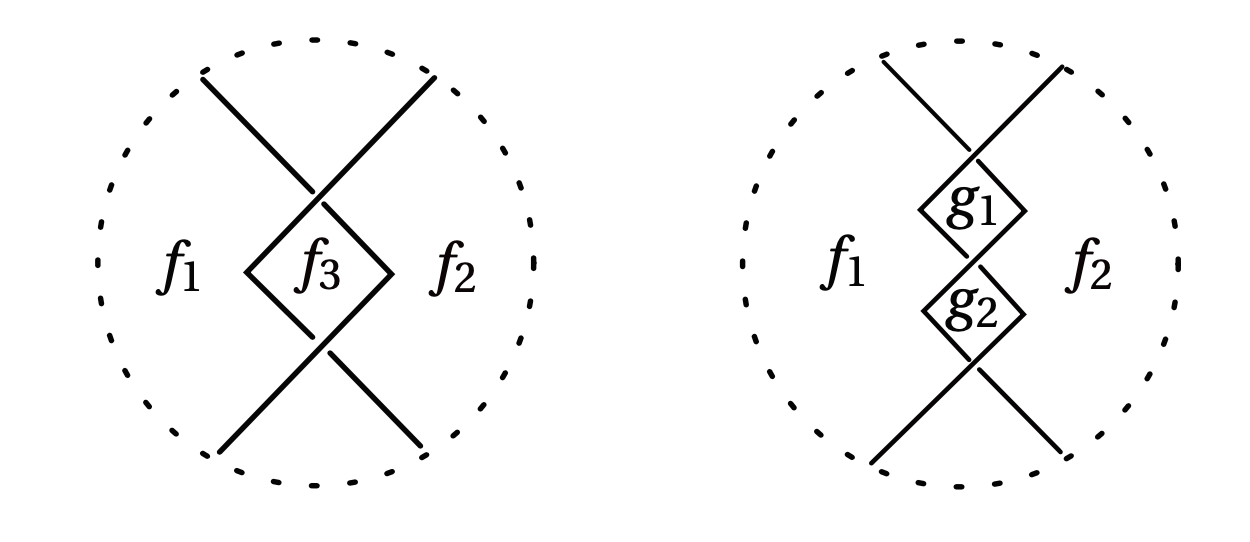}
    \caption{diagram of neighborhood of crossing in $\tau_q$ and $\tau_{q+1}$ respectively.}
    \label{neighborhoodcrossing}
    \end{figure}
    
    \noindent we change the neighborhood to look like the right-hand side of Figure (\ref{neighborhoodcrossing}). We can see that $f_3$ will be a shaded face in $\tau_q$, leaving $f_1$ and $f_2$ unshaded. Therefore, for $\tau_{q+1}$, $g_1$ and $g_2$ are shaded faces. Thus, we have one more vertex for the Tait graph of $\tau_{q+1}$ than $\tau_q$. Further, the face in the knot diagram associated to this new vertex has two crossing associated to the first and third vertex, so this new vertex will have an edge connecting it to the corresponding two vertices. All the crossings are positive, so the edges will have positive weights. Thus the Tait graph is of the desired form, which gives the result. 
\end{proof}

\begin{lemma}\label{balancedtaitgraph2q}
    Let $\tau_q$ be a $(2,q)$ torus knot. When creating the balanced overlaid Tait graph, delete the first two faces from the right, not including the outside face. Then the balanced overlaid Tait graph will be of the form:

    \begin{center}
\includegraphics[width = 50mm]{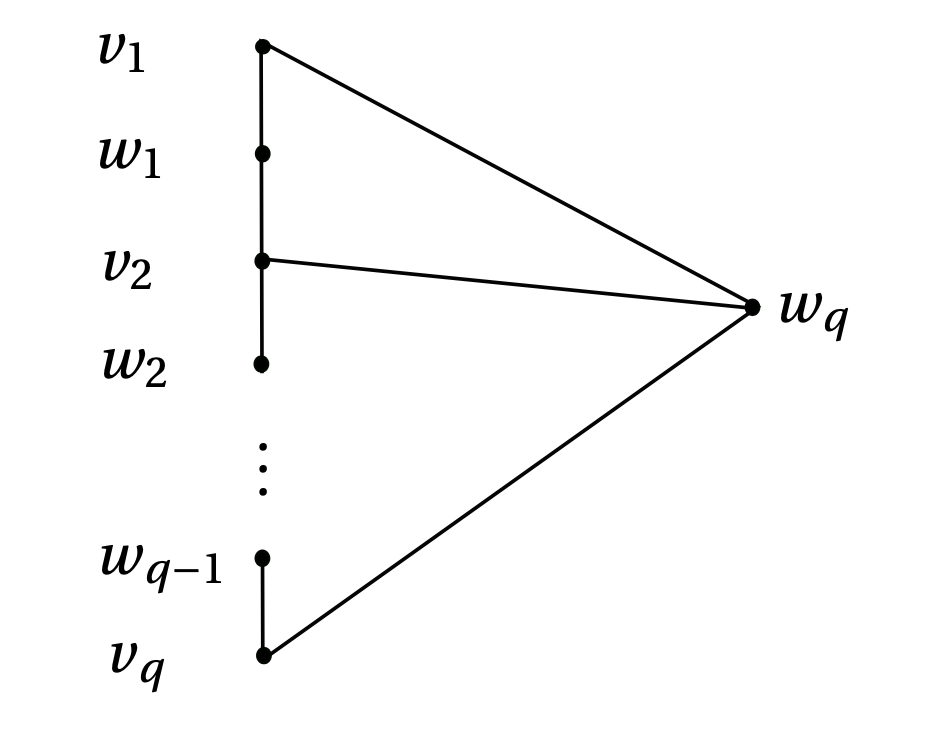}
\end{center}
    and when $q=1$ is of the form:
    \begin{center}
\includegraphics[width = 30mm]{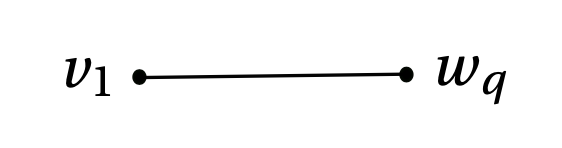}
\end{center}
    \noindent where the $v_i$ correspond to the crossings of the knot for all $1\leq i \leq q$, $w_i$ correspond to the shaded faces of the knot between crossings $v_i$ and $v_{i+1}$ for $1\leq i \leq q-1$, and $w_q$ corresponds to the unshaded crossing of the outside face.
\end{lemma}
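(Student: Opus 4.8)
The plan is to read the balanced overlaid Tait graph of $\tau_q$ off the Tait graph already computed in Lemma \ref{taitgraph2qknot}, using the dictionary supplied by Definition \ref{balancedoverlaidtait}: the vertices of $V_1$ are the crossings, each edge of the Tait graph is exactly one crossing, and the two shaded faces joined by that edge are precisely the shaded faces that this crossing touches. Denote the shaded faces of Lemma \ref{taitgraph2qknot} by $f_0, f_1, \dots, f_{q-1}$, with $f_0$ the rightmost shaded face; that lemma identifies the Tait graph as the $q$-cycle $f_0 f_1 f_2 \cdots f_{q-1} f_0$ (a loop when $q=1$, a doubled edge when $q=2$), whose $q$ edges are exactly the $q$ crossings of $\tau_q$.

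First I would pin down the faces. As a $4$-valent planar graph a $(2,q)$ diagram has $q$ vertices and $2q$ edges, so by Euler's formula it has $q+2$ faces. Lemma \ref{taitgraph2qknot} accounts for the $q$ shaded faces, so exactly two faces are unshaded; these are the two complementary regions of the planar cycle, its bounded inner region and the unbounded outer region. Following Definition \ref{balancedoverlaidtait} we delete ``the first two faces from the right'': the rightmost shaded face $f_0$ and the adjacent inner unshaded region, which meet along a common strand of the diagram exactly as the definition demands. What remains in $W$ is the column of $q-1$ shaded faces, relabeled $w_1, \dots, w_{q-1}$ so that $w_i = f_i$, together with the single outer unshaded face $w_q$; this gives $|W| = q = |V_1|$.

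Next I would record the adjacencies. Label the crossings by the edges they are: let $v_1$ be the edge $f_0 f_1$, let $v_{i+1}$ be the edge $f_i f_{i+1}$ for $1 \le i \le q-2$, and let $v_q$ be the edge $f_{q-1} f_0$. A surviving shaded vertex $w_i = f_i$ is an endpoint of exactly the two cycle edges $f_{i-1} f_i$ and $f_i f_{i+1}$, namely the crossings $v_i$ and $v_{i+1}$; since adjacency in the balanced graph means the crossing touches the face, this yields precisely $w_i \sim v_i, v_{i+1}$, the claimed nearest-neighbor pattern. The one adjacency needing a separate observation is the outer face $w_q$: every edge of a planar cycle borders both complementary regions, so every crossing touches \emph{both} unshaded faces; after the inner one is deleted, each crossing is still incident to the surviving outer face, whence $w_q \sim v_1, \dots, v_q$. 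Together these adjacencies give exactly the stated form, with the degenerate case $q=1$ (empty column, graph a single edge $v_1 w_1$) checked by hand and matching the separate figure.

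The main point to get right is the bookkeeping between the two labelings, since Lemma \ref{taitgraph2qknot} indexes shaded faces while Definition \ref{balancedoverlaidtait} indexes crossings; the correspondence ``edge of the Tait graph $=$ crossing'' must be tracked so that the off-by-one indexing $w_i \sim v_i, v_{i+1}$ comes out correctly, and I would sanity-check it against the trefoil, whose modified adjacency matrix already displays this pattern. The only genuinely geometric inputs are that $f_0$ and the deleted unshaded region meet along a strand and that every crossing borders the outer region; both are immediate from the explicit diagram underlying Lemma \ref{taitgraph2qknot}. If one prefers to avoid appealing to the picture, the whole statement also follows by the same induction on $q$ used there: adding a crossing introduces one new crossing $v$ and one new shaded bigon $w$, lengthens the chain of shaded vertices by one, and leaves the two deleted faces at the far right untouched.
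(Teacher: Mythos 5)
Your proof is correct, but it takes a genuinely different route from the paper. The paper proves the lemma by induction on $q$, mirroring the proof of Lemma \ref{taitgraph2qknot}: it checks $q=1,2,3$ by hand and, for the inductive step, inserts one new crossing between the first and second crossings, tracks locally that the two new faces $g_1,g_2$ are shaded, and notes that the new crossing-vertices still connect to the outside face. You instead derive the balanced overlaid Tait graph in one shot from Lemma \ref{taitgraph2qknot} via the dictionary crossing $=$ Tait edge: Euler's formula ($q$ vertices, $2q$ edges, hence $q+2$ faces) pins down exactly two unshaded faces as the inner and outer complementary regions of the embedded $q$-cycle; the deleted pair (rightmost shaded face and inner unshaded face, meeting along the inner closure arc) is checked against Definition \ref{balancedoverlaidtait}; the column adjacencies $w_i \sim v_i, v_{i+1}$ fall out of incidence in the cycle; and the fan $w_q \sim v_1,\dots,v_q$ follows because every edge of a planar cycle borders both complementary regions. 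Your approach buys a global, non-inductive argument that makes the degree-$q$ vertex $w_q$ transparent all at once --- something the paper's induction only verifies locally at each step --- and it reuses Lemma \ref{taitgraph2qknot} rather than redoing diagram surgery; the paper's induction, in exchange, is self-contained at the level of local diagram moves and runs in exact parallel with its proof of the Tait-graph lemma, so the two proofs share their base cases and figures. Your bookkeeping of the two labelings (shaded faces indexed $f_0,\dots,f_{q-1}$ versus crossings $v_1,\dots,v_q$, with $v_1$ the edge $f_0f_1$ since the topmost crossing touches the between-arcs face $f_0$) is consistent with the ordering the paper uses later in Proposition \ref{activitywordtorus1}, and your hand-checks of the degenerate cases $q=1$ (loop; single edge $v_1w_1$) and $q=2$ (doubled edge; $4$-cycle) match the paper's. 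Your closing remark that the statement could alternatively be proved by the same insertion-induction is, in fact, exactly what the paper does.
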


\begin{proof}
    We will prove this lemma by induction on $q$. First assume $q=1$. Then then $\tau_1$ is the closure of the braid $\sigma_1$. Deleting the two interior faces, we can easily see that the balanced overlaid Tait graph will be of the desired form. Next, consider the balanced overlaid Tait graph of $\tau_2$. There are two crossings in $\tau_2$ which will correspond to vertices $v_1$ and $v_2$. There are four faces of $\tau_2$. We delete the innermost unshaded face and the leftmost shaded face, leaving one shaded face between the two crossings and the outside face. Then $v_1$ and $v_2$ are both connected to the two vertices corresponding to the faces, giving the desired graph. Notice that the balanced overlaid Tait graph of $\tau_3$ was made in Figure (\ref{balancedoverlaidtaittrefoil}). 

    Assume that the result is true up to some $q\geq 3$. Then $\tau_{q+1}$ is the closure of the braid $\sigma_1^{q+1}$. This link looks identical to $\tau_q$ except we have an additional crossing. Assume the additional crossing is placed between the 1st and 2nd crossing. That is, in a local neighborhood around the first and second crossing in $\tau_q$, as shown on the left-hand side of Figure (\ref{neighborhoodcrossing}), we change the neighborhood to look like the right-hand side of Figure (\ref{neighborhoodcrossing}).
    
    By induction, we know in between the two crossings in the first figure must be a shaded face. Therefore, faces $f_1$ and $f_2$ will be unshaded, making $g_1$ and $g_2$ shaded faces. Therefore, the vertices in the balanced overlaid Tait graph coming from the new faces will both correspond to shaded faces. Further, the new vertices still connect to the outside face, which will give that the balanced overlaid Tait graph of $\tau_{q+1}$ will be of the desired form. 
\end{proof}

\begin{proposition}\label{activitywordtorus1}
Let $P_q$ be the the sum of all the activity words associated to the perfect matchings of a balanced overlaid Tait graph of a $(2,q)$ torus knot. We have that
\[P_q = lD^{q-1}+\sum_{i=1}^{q-1} dL^{i}D^{q-1-i} \].
\end{proposition}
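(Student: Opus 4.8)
The plan is to compute the partition function $Z(\tau_q) = \sum_P \prod_{e \in P} \mu_P(e)$ directly by enumerating the perfect matchings of the balanced overlaid Tait graph described in Lemma~\ref{balancedtaitgraph2q}, and reading off the activity-letter weights. From that lemma, the graph has crossing-vertices $v_1, \dots, v_q$ on one side and face-vertices $w_1, \dots, w_q$ on the other, where each interior $w_i$ (for $1 \le i \le q-1$) is a shaded face lying between $v_i$ and $v_{i+1}$, and $w_q$ is the unshaded outside face adjacent to every crossing. So the key structural facts I would first record are the adjacencies: $w_i$ is adjacent exactly to $v_i$ and $v_{i+1}$ for $1 \le i \le q-1$, while $w_q$ is adjacent to all of $v_1, \dots, v_q$. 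This makes the graph a ``ladder-like'' bipartite graph (a path of shaded faces together with one hub vertex), and perfect matchings have a clean combinatorial description.

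Next I would classify the perfect matchings. Since the hub $w_q$ must be matched to exactly one crossing $v_k$, I would fix that choice and argue that it forces everything else: once $v_k$ is used by $w_q$, the remaining vertices $v_1, \dots, \widehat{v_k}, \dots, v_q$ must be matched to $w_1, \dots, w_{q-1}$ using only the path-adjacencies $w_i \sim \{v_i, v_{i+1}\}$. Removing $v_k$ splits the path into the segment on its left, which must match $w_i \to v_i$ for $i < k$, and the segment on its right, which must match $w_i \to v_{i+1}$ for $i \ge k$. Hence there is exactly one perfect matching $P_k$ for each $1 \le k \le q$, and I would enumerate precisely these $q$ matchings. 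I would then invoke Definition~\ref{activitybalancedtait} to assign activity letters: since all crossings are positive (Lemma~\ref{taitgraph2qknot}), no bars appear; for each shaded $w_i$ the lowest-ordered incident edge gets $L$ and the other gets $D$, and for the unshaded hub $w_q$ the lowest-ordered incident edge gets $\ell$ and the rest get $d$. Choosing the natural ordering $v_1 < \dots < v_q$, the hub edge to $v_k$ is weighted $\ell$ precisely when $k=1$ and $d$ otherwise; and for each interior $w_i$, the edge to $v_i$ (lower) is $L$ while the edge to $v_{i+1}$ is $D$. Collecting weights in $P_k$: the left segment contributes $L$ for each of the $i<k$ faces matched to their lower vertex, the right segment contributes $D$ for each of the $q-1-(k-1)$ faces matched to their upper vertex, and the hub contributes $\ell$ if $k=1$ and $d$ if $k \ge 2$. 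This yields $P_1 = \ell D^{q-1}$ and $P_k = d\,L^{k-1} D^{q-k}$ for $2 \le k \le q$, which after reindexing $i = k-1$ is exactly the claimed formula $\ell D^{q-1} + \sum_{i=1}^{q-1} d L^i D^{q-1-i}$.

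I would finish by confirming the base cases $q=1,2$ against the explicit small graphs in Lemma~\ref{balancedtaitgraph2q} to make sure the ordering conventions and the single-face/hub degeneracies agree with the formula, and by double-checking consistency with the $q=3$ trefoil computation in Example~\ref{signedperfectmatchings}, whose matchings gave $\ell D^2, dLD, dL^2$ (the latter two being the $i=1,2$ terms). The main obstacle I anticipate is not the counting but the bookkeeping of \emph{which} incident edge at each face-vertex is ``lowest ordered,'' since the formula's clean shape depends on the vertex ordering being compatible with the left-to-right structure of the braid; I would therefore need to pin down the ordering explicitly and verify that it makes the lower-indexed crossing the $L$-edge at every interior shaded face and makes $v_1$ the $\ell$-edge at the hub. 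Once that ordering is fixed, establishing that each choice of hub-partner $v_k$ forces a unique completion of the matching is the real content, and the weight computation follows mechanically.
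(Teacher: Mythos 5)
Your proposal is correct and follows essentially the same route as the paper's proof: classify the perfect matchings by the choice of edge at the hub vertex $w_q$ (giving exactly $q$ matchings), then read off the letters $\ell, d, L, D$ from the ordering of the crossing vertices. In fact your explicit forcing argument (removing $v_k$ splits the path of interior faces into a left segment matched downward and a right segment matched upward) and your clean indexing $P_k = d\,L^{k-1}D^{q-k}$ are somewhat more careful than the paper's appeal to the ``alternating structure of $L$ and $D$,'' which contains a minor off-by-one slip in its exponents.
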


\begin{proof}
    Recall from Lemma (\ref{balancedtaitgraph2q}) the form of the balanced overlaid Tait graph of a $(2,q)$ torus knot. We then can find the activity letters to be:

\begin{center}
\includegraphics[width = 50mm]{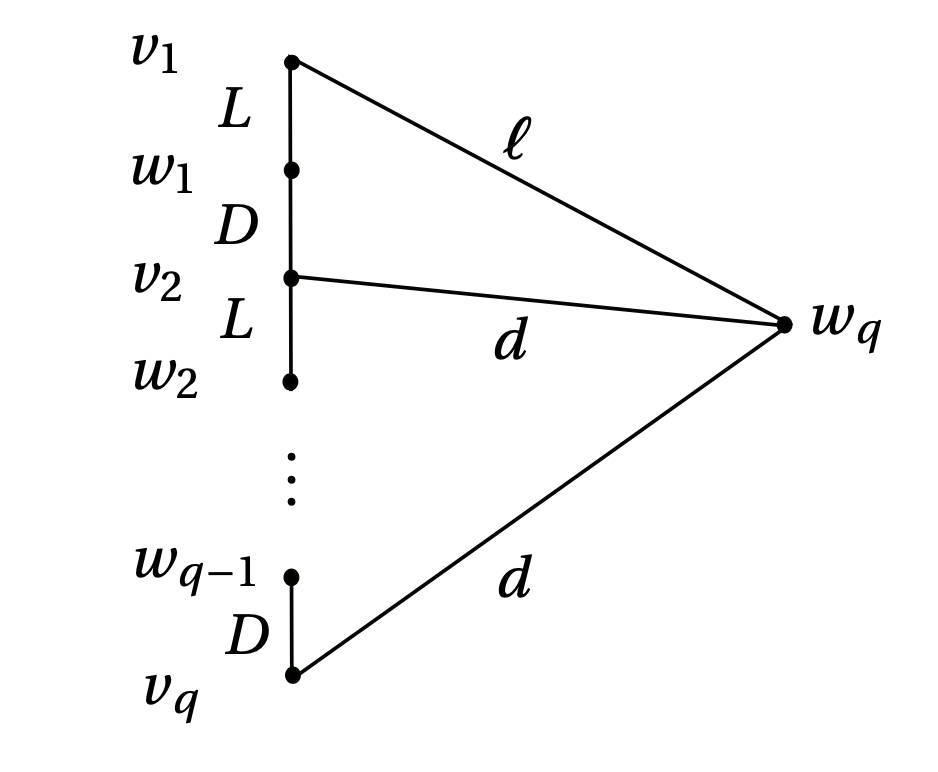}
\end{center}

    A perfect matching is completely determined by the choice of edge for $w_q$, of which there are $q$ options. Choosing the top edge labelled by an $\ell$ we get that the rest of the edges chosen along the column will be labelled $D$, due to the alternating structure of $L$ and $D$. There are $q-1$ such edges. Thus this perfect matching gives activity word $\ell D^{q-1}$.  

    Now choose any other edge connected to $w_q$, of which there are $q-1$ options. This edge is labelled by $d$. Say this edge is connected to some vertex $v_i$ where $2\leq i \leq q$. Above $v_i$, due to the alternating nature of $L$ and $D$, the edges for the perfect matchings must be labelled by $L$, and there will be $i$ of them. Similarly the $q-1-i$ edges below $v_i$ will be labelled $D$. Therefore the activity word for this perfect matching is $dL^iD^{q-i}$. Therefore, we obtain
    \begin{align*}
        P_q = lD^{q-1}+\sum_{i=1}^{q-1} dL^{i}D^{q-1-i}
    \end{align*}
    as we desired.
\end{proof}

\begin{proposition}
    Let $S_q$ be the sum of all the activity words associated to the spanning trees of a Tait graph of a $(2,q)$ torus knot. Then $S_q=P_q$ and thus $(2,q)$ torus knots admit a dimer model.
\end{proposition}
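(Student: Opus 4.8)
The plan is to read the signed Tait graph of the $(2,q)$ torus knot directly off Lemma~\ref{taitgraph2qknot} and then to enumerate its spanning trees by hand. For $q\geq 3$ the picture there is the cycle graph $C_q$ on the vertices $v_0,v_1,\dots,v_{q-1}$, the $q$ crossings contributing the $q$ edges $v_0v_1,v_1v_2,\dots,v_{q-2}v_{q-1},v_{q-1}v_0$, all positively signed since every crossing of the positive torus knot is positive. The degenerate cases $q=1$ (a single loop) and $q=2$ (a digon) are handled by the same bookkeeping and match $P_1=\ell$ and $P_2=\ell D+dL$. Since a cycle on $q$ edges has exactly $q$ spanning trees, each obtained by deleting one edge, I would compute the activity word of each of these $q$ trees and sum.

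The computation rests on one structural observation about $C_q$: its only cycle is the whole graph, and for the spanning tree $T_j:=C_q\setminus\{e_j\}$ the fundamental cut of any remaining (tree) edge $e$ is the pair $\{e,e_j\}$. Fixing the cyclic edge order $e_1<e_2<\dots<e_q$ and applying Definition~\ref{activitysignedtait}, the deleted edge $e_j$ is weighted $\ell$ exactly when it is the globally lowest edge (that is, $j=1$) and $d$ otherwise, while a tree edge $e$ is weighted $L$ precisely when $e<e_j$ (so that $e$ is the lower element of its fundamental cut $\{e,e_j\}$) and $D$ otherwise. Deleting $e_1$ therefore yields the word $\ell D^{q-1}$, and deleting $e_j$ with $2\leq j\leq q$ yields $dL^{j-1}D^{q-j}$, since exactly $j-1$ tree edges lie below $e_j$ and $q-j$ lie above it. Summing and reindexing with $i=j-1$ gives
\[
S_q=\ell D^{q-1}+\sum_{j=2}^{q} dL^{j-1}D^{q-j}=\ell D^{q-1}+\sum_{i=1}^{q-1} dL^{i}D^{q-1-i}=P_q,
\]
which is exactly the expression of Proposition~\ref{activitywordtorus1}.

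Two small points round out the argument. First, although the individual activity words depend on the chosen edge order, the formal sum $S_q$ does not: the numbers of internally and externally active edges across all spanning trees are the coefficients of the Tutte polynomial, and since the tree and cotree sizes are fixed these two numbers determine each monomial in $L,\ell,D,d$; hence any ordering reproduces the same $S_q$, and comparing against $P_q$ with a convenient one loses nothing. Second, all edges being positive means no barred letters appear, so the signed activity of Definition~\ref{activitysignedtait} reduces to ordinary activity here. Finally, applying the specialization $\mu$ of Definition~\ref{specialization} to the identity $S_q=P_q$ and invoking Thistlethwaite's Theorem~\ref{thistlewaitestuff} (which identifies the specialized spanning-tree sum with the bracket polynomial) shows that the partition function of the balanced overlaid Tait graph agrees with $\langle\tau_q\rangle$, so $\tau_q$ admits a dimer model. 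The only genuinely delicate step is the activity bookkeeping in the second paragraph; it is tractable precisely because the matroid of a cycle is so simple, every fundamental cut being a two-element set containing the deleted edge.
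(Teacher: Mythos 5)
Your proof is correct and takes essentially the same approach as the paper: both enumerate the $q$ spanning trees of the cycle-shaped Tait graph by deleting each edge in turn, assign $\ell$ to the deleted edge exactly when it is the lowest-ordered one and $d$ otherwise, and label the remaining tree edges $L$ or $D$ according to whether they lie below or above the deleted edge (your fundamental-cut observation is just a cleaner phrasing of the paper's ``lowest edge that reconnects $T_k-\{e_j\}$'' argument), yielding $S_q=\ell D^{q-1}+\sum_{i=1}^{q-1}dL^iD^{q-1-i}=P_q$. Your supplementary remarks---order-independence of the sum, the absence of barred letters since all crossings are positive, and the passage to the dimer-model conclusion via the specialization and Theorem~\ref{thistlewaitestuff}---only make explicit steps the paper leaves implicit.
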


\begin{proof}
    Recall from Lemma \ref{taitgraph2qknot} the form of the Tait graph of a $(2,q)$ torus knot. Removing any edge from the graph results in a spanning tree. Label the edge connecting $v_0$ and $v_1$, $e_0$. Then in a counterclockwise fashion, label the rest of the edges. Call $T_k$ the spanning tree that removes edge $e_k$. If $k=0$, then that edge is the lowest edge in the cycle $T_0 \cup \{e_0\}$, so it is given a weighting of $\ell$. If $k\neq 0$ then $e_k$ is not the lowest edge in the cycle and will be given the weight $d$. 

    For any $k$ consider $e_j$ where $j\neq k$. If $j<k$ then $e_j$ is the lowest edge that reconnects $T_k-\{e_j\}$. If $j>k$ then $e_k$ is the lowest edge that reconnects $T_k-\{e_j\}$. Thus when $j<k$, $e_j$ is labelled $L$ and if $j>k$, $e_j$ is labelled $D$, which gives that $P_q=S_q$.  
\end{proof}

\begin{corollary}\label{dimerfortorusknots}
Every $(p,2)$ torus knot admits a dimer model.
\end{corollary}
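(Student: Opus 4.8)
The plan is to reduce the statement to the immediately preceding proposition by exploiting the classical symmetry of torus knots. First I would recall the well-known fact that the $(p,q)$ torus knot and the $(q,p)$ torus knot are isotopic; this follows from presenting the knot as a curve on the standard unknotted torus in $\mathbb{R}^3$ that wraps $p$ times one way and $q$ times the other, and then applying the homeomorphism of the torus that interchanges its two generating circles. Specializing to $q=2$, this gives that the $(p,2)$ torus knot is isotopic to the $(2,p)$ torus knot. I would also note that for $(p,2)$ to be a knot (rather than a multi-component link) we need $\gcd(p,2)=1$, i.e.\ $p$ odd, and under the same coprimality condition $(2,p)$ is a knot as well, so the two families correspond exactly.

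Next I would emphasize that ``admitting a dimer model'' is, by Definition \ref{dimermodel}, a property of a particular oriented link \emph{diagram}, namely the one whose balanced overlaid Tait graph is the object under analysis. The relevant diagram for the $(2,p)$ torus knot is its realization as the closure of the braid $\sigma_1^{p}$, whose Tait graph and balanced overlaid Tait graph were computed in Lemmas \ref{taitgraph2qknot} and \ref{balancedtaitgraph2q}. Since the $(2,p)$ and $(p,2)$ torus knots are the same knot, this very diagram is also a diagram of the $(p,2)$ torus knot.

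Finally, I would invoke the proposition proved just above, which establishes $S_q = P_q$ and hence that every $(2,q)$ torus knot admits a dimer model via the closure of $\sigma_1^{q}$. Taking $q = p$ shows that the $(2,p)$ torus knot admits a dimer model through the closure of $\sigma_1^{p}$, and by the identification above this exhibits a dimer model for the $(p,2)$ torus knot, completing the argument.

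I do not anticipate any genuine obstacle: all the combinatorial content lives in the preceding proposition, and the corollary is essentially a one-line consequence of the torus-knot symmetry $(p,q)\cong(q,p)$. The only point requiring care is the conceptual one that a dimer model is a feature of a chosen diagram rather than of the abstract knot type; accordingly, the corollary should be read as asserting the existence of a diagram of the $(p,2)$ torus knot—namely the $\sigma_1^{p}$ braid closure—that admits a dimer model.
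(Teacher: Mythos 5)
Your proposal is correct and follows essentially the same route as the paper: invoke the classical isotopy between the $(p,q)$ and $(q,p)$ torus knots and then apply the preceding proposition that $(2,q)$ torus knots (as closures of $\sigma_1^{q}$) admit a dimer model. Your additional remark that a dimer model is a property of a chosen diagram rather than of the knot type is a worthwhile clarification that the paper's proof leaves implicit.
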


\begin{proof}
It is known that every $(p,q)$-torus knot is isotopic to a $(q,p)$-torus knot. This implies that the $(p,2)$-torus knot is isotopic to a $(2,q)$ torus knot which can be obtained as the closure of the braid $\sigma_1^q$. By Cohen \cite{Coh14} (and our proof above), the $(2,p)$-torus knots admit a dimer model which implies that $(p,2)$-torus knots admit a dimer model. 

\end{proof}


\section{A Class of Homogeneous Closed Braids Have a Dimer Model}\label{hombraiddimer}

In this section, we show that links obtained as the closure of homogeneous braids admit a dimer model. This extends the work in \cite{Coh12} to a new family of links.

\begin{theorem}\label{dimerforbraidclosure2}
Let $L$ the closure of the word $\sigma_1^{m_1}\sigma_2^{m_2}\dots\sigma_{n-1}^{m_{n-1}}$ where $m_1,...,m_{n-1}\in\mathbb{N}$ or $-m_1,...,-m_{n-1}\in \mathbb{N}$. Then $L$ admits a dimer model.
\end{theorem}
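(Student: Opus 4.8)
The plan is to mimic the strategy already established for the $(2,q)$ torus knots in Section \ref{torusdimer}: rather than comparing the modified adjacency matrix directly against the spanning tree sum, I would invoke Theorem \ref{thistlewaitestuff} and reduce the whole problem to a purely combinatorial identity. Concretely, by Thistlethwaite's theorem the bracket polynomial is recovered by the spanning tree sum $S_L = \sum_T \prod_{e\in G}\mu_T(e)$ over the signed Tait graph $G$ of $L$, so it suffices to show that the perfect matching sum $Z(L) = \sum_P \prod_{e\in P}\mu_P(e)$ over the balanced overlaid Tait graph equals $S_L$. Thus the goal statement ``admits a dimer model'' becomes the single equation $Z(L) = S_L$, and everything reduces to understanding the two graphs and their weightings for the specific braid word $\sigma_1^{m_1}\cdots\sigma_{n-1}^{m_{n-1}}$.

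First I would describe the Tait graph and the balanced overlaid Tait graph for this homogeneous braid closure explicitly, in the spirit of Lemmas \ref{taitgraph2qknot} and \ref{balancedtaitgraph2q}. The key structural observation is that the block $\sigma_i^{m_i}$ behaves locally exactly like a $(2,m_i)$ torus segment: it contributes a ``ladder'' of $m_i$ crossings sharing a common pair of outer faces, with $m_i-1$ interior faces stacked between consecutive crossings. Because the braid is a product of such blocks along disjoint pairs of adjacent strands, the signed Tait graph should be (up to the sign determined by whether the $m_i$ are all positive or all negative) a chain of multi-edge ``bundles,'' where the $i$-th bundle consists of $m_i$ parallel edges, and consecutive bundles share a single vertex. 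The homogeneity hypothesis (all $m_i$ of one sign) is precisely what guarantees that every edge within a bundle carries the same sign, so the activity-letter assignment within each bundle is clean and no mixed-sign cancellation complicates the analysis.

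The central step is then a block-multiplicativity argument. I would order the edges (equivalently the crossings) block by block and show that both sums factor as a product over the $n-1$ blocks: $S_L = \prod_i S^{(i)}$ and $Z(L) = \prod_i Z^{(i)}$, where $S^{(i)}$ and $Z^{(i)}$ are exactly the per-block sums already computed for a $(2,m_i)$ torus knot in Proposition \ref{activitywordtorus1} and its companion proposition. A spanning tree of the chain-of-bundles Tait graph is determined independently in each bundle (choose which parallel edge to omit, since the bundles meet only at cut vertices and contracting a tree in one bundle does not affect activity in another), and likewise a perfect matching of the balanced overlaid Tait graph decomposes into independent local choices at each block, matched against the shared outside-face vertex. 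Since Section \ref{torusdimer} already proves $S^{(i)} = P_{m_i} = Z^{(i)}$ for each individual block, multiplying over $i$ yields $S_L = Z_L$, and hence the dimer model. The negative case $-m_i\in\mathbb{N}$ follows by the barred-letter symmetry in Definitions \ref{activitysignedtait} and \ref{activitybalancedtait}, which systematically swaps $L\leftrightarrow\bar L$, etc., preserving the identity.

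The main obstacle I anticipate is justifying the factorization of the spanning tree sum, since spanning trees are a global object and activity of an edge depends on the chosen edge ordering and on which cycle or cut it lies in. The crux is verifying that the bundles are joined only at cut vertices, so that each cycle of $G\cup\{e\}$ (for a non-tree edge $e$) stays confined to a single bundle and activity never couples across blocks; once this locality is established the product structure is forced. A secondary subtlety is the bookkeeping of the two deleted faces in the balanced overlaid Tait graph and the role of the single global outside-face vertex $w$ shared across blocks --- I would need to check that choosing one dimer edge at $w$ still leaves each block's internal matching independent, mirroring the ``$w_q$ determines the matching'' mechanism in Proposition \ref{activitywordtorus1}. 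I would handle both points by setting up the edge ordering carefully (all of block $1$'s edges below all of block $2$'s, etc.) so that the lowest-edge conditions defining $L,\ell,D,d$ reduce, within each block, to exactly the torus-knot computation.
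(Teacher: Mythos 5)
Your overall route is the paper's route: decompose the Tait graph of the homogeneous closure into blocks glued at cut vertices, identify each block with a $(2,m_i)$ torus knot, factor both the spanning-tree sum and the perfect-matching sum over blocks so that Proposition \ref{activitywordtorus1} and the proposition following it give $S^{(i)}=Z^{(i)}=P_{m_i}$, and dispose of the negative case by the barred-letter symmetry. The paper organizes the general case as an induction on $n$ rather than a one-shot product: after deleting the two rightmost faces between strands one/two and two/three, it observes that in the balanced overlaid Tait graph a choice of dimer at the right-most innermost-face vertex determines the entire $\sigma_{n-1}$ column, so no edge joining that column to the rest of the graph occurs in any perfect matching; one factors out the column's word and reduces to $n-1$ strands. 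That is exactly the ``$w_q$ determines the matching'' mechanism you flagged, so your second anticipated obstacle is the right one. Note, however, that for $n\geq 4$ only two faces in total may be deleted, so the blocks of the balanced overlaid Tait graph are genuinely connected to one another; your picture of blocks meeting only at a single shared outside-face vertex is too optimistic, and the paper's cross-edges-never-matched argument (or your edge-ordering argument upgraded to it) is actually required.

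There is also one concrete structural error in your description of the signed Tait graph. The blocks do not form a uniform chain of $m_i$-fold parallel edges of one global sign: with a single checkerboard coloring the shading alternates across the strand corridors, so the blocks alternate between the $m_i$-gon (cycle) form of Lemma \ref{taitgraph2qknot} and its dual two-vertex form with $m_i$ parallel edges, and the checkerboard signs of the crossings alternate between consecutive blocks even when all $m_i>0$ --- the checkerboard sign convention depends on the local shading, not on the sign of the exponents, so your parenthetical tying the sign to the sign of the $m_i$ is false. The paper records this by observing that $G_2$ is a Tait graph of $\sigma_1^{m_2}$ \emph{arising from the dual checkerboard coloring}, and then uses the remark in Section \ref{torusdimer} that a Tait graph and its dual produce the same activity-word sum. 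If you computed literally with an all-positive bundle chain, each dual-form block would contribute the unbarred dual word, which under Definition \ref{specialization} specializes to $P_{m_i}$ with $A\mapsto A^{-1}$ --- the mirror --- and the resulting product would not be $\langle L\rangle$. So your factorization skeleton is sound and matches the paper, but the block-by-block identification must be corrected to alternate between the cycle form and the barred/dual bundle form before the per-block equality $S^{(i)}=P_{m_i}$ holds.
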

\begin{proof}
We first prove the case when $n=3$. 

\begin{lemma}\label{dimerforbraidclosure1}
Let $\sigma_1^{m_1}\sigma_2^{m_2}\in B_3$ be a braid word where either $m_1,m_2 \in \mathbb{N}$ or $-m_1,-m_2 \in\mathbb{N}$ and let $L$ be the closure of this braid. Then $L$ admits a dimer model. 
\end{lemma}

\begin{proof} 
The proof for $m_1, m_2 \in \mathbb{N}$ is similar to $-m_1, -m_2 \in \mathbb{N}$, so we just just prove for $m_1,m_2 \in \mathbb{N}$. We will order the crossings in the balanced overlaid Tait graph from top to bottom (or equivalently, by the order they show up in the word). We have the balanced overlaid Tait graph has two components when we delete the two rightmost faces sitting between the first and second strand, and second and third strand. Hence, we can determine the perfect matchings component-wise. This is straightforward though, as each component is isomorphic to the balanced overlaid Tait graph of a $(2,q)$ torus knot. These knots are a special case of pretzel knots, which is known to admit a dimer model by \cite{Coh12}. So, now we must show that the spanning trees of the Tait graph give us the activity words (coming from perfect matchings) of two $(2,q)$ torus knots whose graphs are isomorphic to the components of our graph. 
\\ The Tait graph of $L$ is two Tait graphs of $(2,q)$ torus knots glued together at a common vertex. We will denote $G_1$ the triangle subgraph with $|m_1|$ edges, and $G_2$ the subgraph with two vertices and $|m_2|$ edges. We observe that $G_1$ is a Tait graph of the closure of the word $\sigma_1^{m_1}\in B_2$. Further, $G_2$ is a Tait graph (arising from the dual checkerboard coloring) of the closure of the word $\sigma_1^{m_2}\in B_2$. Now, we observe that the letter assignment in the spanning tree contributed by $G_1$ is independent of the assignment in $G_2$, and vice versa. Hence, we take all products words of the spanning trees of $G_2$ with those of $G_1$ which is equivalent to taking all products of perfect matchings on a disconnected bipartite graph. 
\end{proof}

 With only minor modification the proofs of $m_1,...,m_{n-1}\in \mathbb{N}$ and $-m_1,...,-m_{n-1}\in \mathbb{N}$ are the same, so we will only prove for $m_1,...,m_{n-1}\in \mathbb{N}$. We argue similarly as in Lemma \ref{dimerforbraidclosure1} above. If $n$ is odd, the face coming from the closure of the right-most strand is colored; if $n$ even, it is uncolored. We make the same observation that the balanced overlaid Tait graph of $L$ has subgraphs isomorphic to those of $(2,q)$ torus knots, where we delete the two rightmost faces sitting between the first and second strand and the second and third strand. To argue identically, we must show that the edges connecting these subgraphs don't show up in any perfect matching, so we have essentially disjoint subgraphs. If we pick an edge connecting to the right-most vertex coming from the inner-most face, this determines every edge in the $\sigma_{n-1}$ column. In particular, no other edge touching this column shows up in a perfect matching. Hence, we can factor out the word from the $\sigma_{n-1}$ column, and reduce to the case of some braid with $n-1$ strands, which completes the induction.
\end{proof}


\begin{remark}\label{general_adjacency_matrix}
Although it is not included, one can write a closed formula of the adjacency matrix for any given braid word.
\end{remark}


\section{Kauffman Polynomials Arising from Balanced Overlaid Tait Graphs}\label{Kauffman Polynomials}

Although we initially studied the same invariant as the author in \cite{Coh12}, their methodology should be explored in other settings. One of the key components in the argument is the formalism of the Jones polynomial in terms of a skein relation. For this reason, we study the Kauffman polynomial which can also be defined in terms of a skein relation.

\begin{definition}[Kauffman Polynomial \cite{Kau90}]\label{kauffmanpolynomial}
The \emph{Kauffman polynomial} $F(L)$ of a link diagram $L$ is defined by 
\[F(L) = a^{-wr(L)}K(L)\]
where the polynomial $K(L)$ of a link $L$ is defined by the skein relation (see Fig.\ref{crossingsdef}):
\[L_+ + L_- = z(L_a + L_b)\]
Further, if we resolve a positive kink, $L_{pos}$, in our link, by Reidemeister 1, we have $K(L_{pos}) = a^{-1}K(L)$. Similarly for a negative kink, we have $K(L_{neg}) = aK(L)$. Additionally, $K(O) = 1$ where $O$ is the unknot. Finally, we have $K(L \sqcup O) = (\frac{a+a^{-1}}{z} -1)K(L)$. 
\\
\end{definition}

\begin{proposition}\label{kauffmanpolyntorusknot} Let $P_q$ be as in Proposition \ref{activitywordtorus1} for $q\geq 2$. Then
\[
    K(2,q) = P_q - \sum_{i=0}^{q-2} z^{q-2-i}\cdot K(2,i)\]
where we specialize via \[L = a^{-1} = l^{-1}\] \[D=d=z\]
\end{proposition}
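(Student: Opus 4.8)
The plan is to reduce the claim to a short two-step skein recursion for the (regular-isotopy invariant) Kauffman polynomial $K(2,q)$, and then to repackage that recursion into the stated closed form by induction on $q$, using a cancellation identity for $P_q$.

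Step 1 (the skein recursion). I would apply the Kauffman relation $L_+ + L_- = z(L_a + L_b)$ at one crossing of the standard diagram of $(2,q)$, the closure of $\sigma_1^q$. Then $L_+ = (2,q)$. Changing the crossing produces $\sigma_1^{q-1}\sigma_1^{-1}$, which a Reidemeister II move reduces to the closure of $\sigma_1^{q-2}$; since $K$ is a regular-isotopy invariant, $L_-$ contributes $K(2,q-2)$. The braid-preserving smoothing gives the closure of $\sigma_1^{q-1}$, i.e. $K(2,q-1)$, while the remaining cap-cup smoothing gives the plat closure of $\sigma_1^{q-1}$. The latter is a single unknotted circle carrying $q-1$ positive curls, so iterating the Reidemeister I rule $K(L_{pos}) = a^{-1}K(L)$ evaluates it to $a^{-(q-1)}$. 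Collecting terms yields
\[
K(2,q) = z\,K(2,q-1) - K(2,q-2) + z\,a^{-(q-1)}.
\]

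Step 2 (the algebraic identity and induction). Writing $P_q$ out under the specialization ($D=d=z$ and $L=\ell=a^{-1}$) and subtracting, all terms cancel in pairs except the lowest one, giving
\[
P_q - z\,P_{q-1} = z\,a^{-(q-1)};
\]
moreover this difference does not depend on the value assigned to $\ell$, since the term $\ell D^{q-1}$ cancels against the matching term of $z P_{q-1}$. I would then prove the proposition by strong induction on $q$. The base case $q=2$ is a direct check: Step 1 gives $K(2,2) = 2za^{-1} - K(2,0)$, which matches $P_2 - K(2,0)$ precisely when $\ell \mapsto a^{-1}$, fixing that specialization. For $q \ge 3$, I substitute $z\,a^{-(q-1)} = P_q - z\,P_{q-1}$ into the recursion of Step 1, replace $z\,K(2,q-1)$ using the inductive hypothesis at $q-1$, and watch the $z\,P_{q-1}$ terms cancel; the surviving $-K(2,q-2)$ is exactly the $i=q-2$ term needed to complete the sum $\sum_{i=0}^{q-2} z^{q-2-i}K(2,i)$, which reassembles the claimed formula at $q$.

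The routine part is the induction; the real obstacle is Step 1, specifically the cap-cup resolution. One must confirm that this smoothing is genuinely the plat closure of $\sigma_1^{q-1}$ (a single component, not a split union), and then determine the curl signs carefully, since an error there would replace $a^{-(q-1)}$ by a different power of $a$ and break the base case. Verifying that the $q-1$ curls are all positive — and hence that the base case forces $\ell \mapsto a^{-1}$ — is the crux of the argument.
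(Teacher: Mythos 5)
Your overall architecture is the same as the paper's (derive a three-term skein recursion with a kink-correction term, then induct using $P_q - zP_{q-1} = z\cdot(\text{kink term})$), but your Step~1 fails at exactly the point you flag as the crux: the $q-1$ curls on the cap--cup resolution are \emph{negative}, not positive. Smoothing one crossing of the closure of $\sigma_1^q$ against the braid orientation yields the plat closure of $\sigma_1^{q-1}$, a single circle that descends through the braid and returns \emph{upward} through the remaining crossings; reversing one strand's direction through a crossing flips its sign, so each formerly positive braid crossing becomes a negative kink (and this sign is intrinsic, since the writhe of a one-component diagram is orientation-independent). You can confirm this at $q=2$ with the bracket: $\langle \mathrm{Hopf}\rangle = -A^4 - A^{-4} = A\,(-A^3) + A^{-1}\langle \mathrm{plat}(\sigma_1)\rangle$ forces $\langle \mathrm{plat}(\sigma_1)\rangle = -A^{-3}$, a negative kink. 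Under the paper's Reidemeister~I rule $K(L_{neg}) = aK(L)$, the correct recursion is therefore
\[
K(2,q) \;=\; za^{q-1} + zK(2,q-1) - K(2,q-2),
\]
which is what the paper's proof uses, and your base case $K(2,2) = 2za^{-1} - K(2,0)$ is simply not the Kauffman polynomial of the Hopf link; the correct value is $K(2,2) = z(a+a^{-1}) - K(2,0)$.

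The knock-on effect is that the induction needs $P_q - zP_{q-1} = za^{q-1}$, i.e.\ the specialization $L=a$, $\ell = a^{-1}$, rather than the $L=\ell=a^{-1}$ you adopt. Your Step~2 algebra is fine as algebra --- the $\ell D^{q-1}$ terms do cancel and $P_q - zP_{q-1} = zL^{q-1}$ for any assignment of $\ell$ --- it is the value of $L$ that your erroneous Step~1 feeds into it that is wrong. In fairness, the proposition's displayed specialization $L = a^{-1} = \ell^{-1}$ is itself inconsistent with the paper's own inductive identity $P_q = za^{q-1} + zP_{q-1}$, which holds only when $L = a$; the $q=2$ base case cannot detect the discrepancy because $P_2 = z(L+\ell)$ is symmetric in $L$ and $\ell$. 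So your compensating sign error reproduces the statement as (mis)printed, but it does not compute $K(2,q)$: already at $q=2$ your recursion returns the wrong polynomial for the Hopf link. The fix is local --- correct the curl sign, use $za^{q-1}$ and $L = a = \ell^{-1}$, and your otherwise identical induction becomes the paper's argument verbatim.
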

\begin{proof} 
Let $(2,q)$ denote the $(2,q)$ torus knot. Then \[K(2,q) = za^{q-1} + zK(2,q-1) - K(2, q-2)\] by examining the skein relations for $K(L)$.

We have that \[K(2,0) =\frac{a+a^{-1}}{z}-1\coloneqq P_0\] 
\[K(2,1) = a^{-1} \coloneqq P_1\] and we check the base case for $K(2,2)$.
We have \[K(2,2) = za + za^{-1} - K(2,0) = Ld + lD - K(2,0)\]
For our inductive step, we have \[K(2,q) = za^{q-1} + zK(2,q-1) - K(2,q-2)\]
By Proposition \ref{activitywordtorus1}, we note that \[K(2,q) = za^{q-1} +zP_{q-1} - \sum_{i=0}^{q-2} z^{q-2-i}\cdot K(2,i) = P_q - \sum_{i=0}^{q-2} z^{q-2-i}\cdot K(2,i)\]
where $P_q = za^{q-1} + zP_{q-1}$ by the skein relations for the bracket polynomial.
\end{proof}
\vspace{10mm}
\hspace{-4.2mm}Now, we introduce a recursive relation defined as 
\[g_0 = 1\]
\[g_1 = z\]
\[g_n = zg_{n-1} - g_{n-2}\]

\begin{remark}
It is worth noting that this relation is reminiscent of the recursive Chebyshev polynomials one would expect from the colored Jones polynomial. See \cite{KM91} by Kirby and Melvin.
\end{remark}

\begin{proposition}\label{recursiverelation}
For $n\geq 2$, 
 \[g_n = z^n - \sum_{i=0}^{n-2} z^{i}\cdot g_{n-2-i}\]
\end{proposition}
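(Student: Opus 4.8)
The plan is to prove the identity by induction on $n$ after reindexing the sum into a convolution that meshes cleanly with the defining recursion $g_n = z g_{n-1} - g_{n-2}$. First I would substitute $j = n-2-i$ to rewrite $\sum_{i=0}^{n-2} z^i g_{n-2-i} = \sum_{j=0}^{n-2} z^{n-2-j} g_j$, and abbreviate this as $T_{n-2}$, where $T_m \coloneqq \sum_{j=0}^{m} z^{m-j} g_j$ for $m \geq 0$ and $T_0 = g_0 = 1$. The target identity then becomes the equivalent statement $z^n = g_n + T_{n-2}$ for $n \geq 2$.

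The key auxiliary observation is that $T_m$ satisfies its own first-order recursion $T_m = z T_{m-1} + g_m$, obtained by splitting off the top term $g_m$ and factoring a single power of $z$ from each of the remaining terms. Granting this, the induction is short. The base case $n = 2$ is the check $g_2 + T_0 = (z^2 - 1) + 1 = z^2$. For the inductive step (now for $n \geq 3$), I assume $z^{n-1} = g_{n-1} + T_{n-3}$ and multiply by $z$ to get $z^n = z g_{n-1} + z T_{n-3}$. Substituting $z g_{n-1} = g_n + g_{n-2}$ from the $g$-recursion and $z T_{n-3} = T_{n-2} - g_{n-2}$ from the $T$-recursion, the two copies of $g_{n-2}$ cancel and leave exactly $z^n = g_n + T_{n-2}$, which closes the induction.

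I do not expect a genuine obstacle: the whole identity is an exact telescoping once the convolution $T_m$ is isolated, and the only care needed is bookkeeping on index ranges --- taking the base case at $n = 2$, where the sum degenerates to the single term $T_0$, and applying the inductive step only for $n \geq 3$ so that $T_{n-3}$ is always defined. As an independent check, one can pass to the generating function $G(x) = \sum_{n \geq 0} g_n x^n$, which the recursion forces to satisfy $(1 - zx + x^2)G(x) = 1$. The desired identity is the coefficient-wise statement $G(x) + x^2(1-zx)^{-1}G(x) = (1-zx)^{-1}$ (the factor $x^2(1-zx)^{-1}G(x)$ being the generating function of $T_{n-2}$); clearing the denominator $(1-zx)$ reduces this to $(1 - zx + x^2)G(x) = 1$, i.e. precisely the functional equation defining $G$. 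This confirms the result and accounts for the Chebyshev-type shape noted in the preceding remark.
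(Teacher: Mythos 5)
Your proof is correct, and it takes a genuinely different (and cleaner) route than the paper's. The paper argues by strong induction: it substitutes the inductive hypothesis for \emph{both} $g_{n-1}$ and $g_{n-2}$ into $g_n = zg_{n-1}-g_{n-2}$, then pairs off corresponding terms of the two expansions so that each pair of the form $z^{k}\left(zg_{m}-g_{m-1}\right)$ collapses back to $z^{k}g_{m+1}$ by the defining recursion, telescoping to the claimed sum; this requires checking both $n=2$ and $n=3$ as base cases and is exactly the kind of regrouping where index bookkeeping is delicate. You instead package the sum as the convolution $T_m=\sum_{j=0}^{m} z^{m-j}g_j$, prove the first-order recursion $T_m = zT_{m-1}+g_m$ (for $m\geq 1$), and restate the claim as $z^n = g_n + T_{n-2}$; the induction then invokes the hypothesis only at $n-1$, closes with a single cancellation of $g_{n-2}$, and needs only the base case $n=2$. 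What your approach buys is a first-order induction with essentially no rearrangement, at the small cost of introducing the auxiliary quantity $T_m$; you are also properly careful about the index ranges ($m\geq 1$ for the $T$-recursion, hence the step only for $n\geq 3$). Your generating-function verification, via $\left(1-zx+x^2\right)G(x)=1$, is a second, independent route that does not appear in the paper at all; the only implicit point there is that the coefficient match at $n=0,1$ uses the initial conditions $g_0=1$, $g_1=z$, which your setup supplies.
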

\begin{proof}
We proceed by induction. For $n=2$: 
\[g_2 = zg_1 - g_0 = z\cdot z - 1 = z^2 - z^0g_0\]
For $n=3$, 
\[g_3=zg_2-g_1=z^3-2z=z^3z^0g_1-z^1g_0\]
For the inductive step, by the recursion relations, we have that 
\[g_n = zg_{n-1} - g_{n-2} = z(z^{n-1} - g_{n-3} - zg_{n-4} - \dots - z^{n-4}g_1 - z^{n-3})-(z^{n-2} - g_{n-4} - zg_{n-5} - \dots - z^{n-5}g_1 - z^{n-4}) \]
We substituted for $g_{n-1}$ and $g_{n-2}$ by our inductive hypothesis. Then, by rearranging our terms, we get
\[g_n = z^n - (zg_{n-3} - g_{n-4}) - z(zg_{n-4} - g_{n-5}) - \dots -  z^{n-4}(zg_1 - g_0) - z^{n-3}g_1 - z^{n-2} \]
Then, we can substitute again with recursion relations, completing the induction:
\[g_n = z^n - g_{n-2} - zg_{n-3} - \dots - z^{n-2} = z^n - \sum_{i=0}^{n-2} z^{i}\cdot g_{n-2-i}\]
\end{proof}

\begin{theorem}\label{kauffmanpolynrecursive}
    For $n\geq 2$, \[K(2,q) = P_q - \sum_{i=0}^{q-2} P_i\cdot g_{q-2-i}\]
\end{theorem}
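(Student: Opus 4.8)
The plan is to prove the identity by strong induction on $q$ (the statement should read $q \ge 2$), phrasing everything in terms of the \emph{defect} $E_q := P_q - K(2,q)$. The target then becomes $E_q = \sum_{i=0}^{q-2} P_i\,g_{q-2-i}$; I abbreviate the right-hand side by $S_q$ and adopt the empty-sum convention $S_q = 0$ for $q < 2$. From the proof of Proposition \ref{kauffmanpolyntorusknot} I already have the two parallel recursions $P_q = za^{q-1} + zP_{q-1}$ and $K(2,q) = za^{q-1} + zK(2,q-1) - K(2,q-2)$, both valid for $q \ge 2$, along with the evaluations $K(2,0) = P_0$ and $K(2,1) = P_1$. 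Subtracting the two recursions cancels the common $za^{q-1}$ and produces a single first-order recursion with source term
\[ E_q = z E_{q-1} + K(2,q-2), \qquad E_0 = E_1 = 0, \]
which is the object I aim to match.

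The heart of the argument is to show that $S_q$ satisfies the companion recursion $S_q = z S_{q-1} + P_{q-2} - S_{q-2}$ for every $q \ge 2$. First I would rewrite the defining three-term recursion $g_n = z g_{n-1} - g_{n-2}$ in the symmetric form $z g_m = g_{m+1} + g_{m-1}$, extending it to $m = 0$ via the convention $g_{-1} = 0$. Feeding this into $z S_{q-1} = \sum_{i=0}^{q-3} P_i\,(z g_{q-3-i})$ splits the sum as $\sum_{i=0}^{q-3} P_i g_{q-2-i} + \sum_{i=0}^{q-3} P_i g_{q-4-i}$; the first sum is $S_q$ minus its top term $P_{q-2}g_0 = P_{q-2}$, while the second, once its vanishing $i = q-3$ term (carrying $g_{-1}$) is dropped and the index is shifted, is precisely $S_{q-2}$. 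Reassembling yields $z S_{q-1} = (S_q - P_{q-2}) + S_{q-2}$, which is the desired recursion. I expect this index bookkeeping to be the main obstacle: tracking the isolated top term $P_{q-2}$, handling the $g_{-1} = 0$ boundary, and re-indexing the leftover convolution as $S_{q-2}$ all need care, so I would verify the thin small cases $q = 2$ and $q = 3$ directly as a check.

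With both recursions established the induction closes at once. The base cases $E_0 = S_0 = 0$ and $E_1 = S_1 = 0$ hold by the evaluations above together with the empty-sum convention. Assuming $E_j = S_j$ for all $j < q$, I substitute into the $S$-recursion to obtain $S_q = z E_{q-1} + P_{q-2} - E_{q-2}$, and then rewrite $P_{q-2} - E_{q-2} = K(2,q-2)$ by the definition of the defect, so that $S_q = z E_{q-1} + K(2,q-2) = E_q$. Unwinding $E_q = P_q - K(2,q)$ gives $K(2,q) = P_q - \sum_{i=0}^{q-2} P_i\,g_{q-2-i}$, completing the induction and the proof.
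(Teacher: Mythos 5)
Your proof is correct, but it takes a genuinely different route from the paper. The paper's proof of Theorem \ref{kauffmanpolynrecursive} is a one-line induction appealing to Proposition \ref{kauffmanpolyntorusknot} and Proposition \ref{recursiverelation}: one substitutes the inductive hypothesis $K(2,i) = P_i - \sum_{j} P_j g_{i-2-j}$ into the formula $K(2,q) = P_q - \sum_{i=0}^{q-2} z^{q-2-i} K(2,i)$, interchanges the double sum, and recognizes the resulting coefficient of each $P_j$, namely $z^{q-2-j} - \sum_{m} z^{q-4-j-m} g_m$, as $g_{q-2-j}$ precisely via the closed-form identity $g_n = z^n - \sum_{i=0}^{n-2} z^i g_{n-2-i}$ of Proposition \ref{recursiverelation}. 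You never invoke Proposition \ref{recursiverelation} at all: instead you work with the defect $E_q = P_q - K(2,q)$ and the convolution $S_q = \sum_{i=0}^{q-2} P_i g_{q-2-i}$, derive the first-order recursion $E_q = zE_{q-1} + K(2,q-2)$ from the parallel recursions for $P_q$ and $K(2,q)$ in the proof of Proposition \ref{kauffmanpolyntorusknot}, and show $S_q$ satisfies the companion recursion $S_q = zS_{q-1} + P_{q-2} - S_{q-2}$ using only the defining three-term relation $g_n = zg_{n-1} - g_{n-2}$ (in the symmetric form $zg_m = g_{m+1} + g_{m-1}$ with $g_{-1}=0$); your index bookkeeping there checks out, as do the base cases $E_0 = S_0 = 0$, $E_1 = S_1 = 0$ coming from $K(2,0) = P_0$ and $K(2,1) = P_1$. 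What each approach buys: the paper's argument reuses the already-established Proposition \ref{recursiverelation} and reduces the theorem to a coefficient comparison in a double sum, while yours is self-contained modulo the skein recursions and in fact renders Proposition \ref{recursiverelation} unnecessary for the theorem, at the cost of introducing and verifying the companion recursion for $S_q$. One further point in your favor: the paper writes the common source term as $za^{q-1}$, whereas the specialization $l = a$, $L = a^{-1}$, $D = d = z$ and the kink relation $K(L_{pos}) = a^{-1}K(L)$ actually give $za^{-(q-1)}$ in both recursions; since your argument only uses that the \emph{same} term appears in the $P$- and $K$-recursions and cancels in $E_q$, it is insensitive to this exponent slip, which is a mild robustness advantage over any argument that manipulates the term explicitly.
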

\begin{proof}
Straightforward induction, appealing to Proposition \ref{kauffmanpolyntorusknot} and \ref{recursiverelation}.
\end{proof}

\section{Further Directions}\label{furtherdirections}
Because of the explicit nature in the computations of this work, there are many natural extensions of our results in which we will consider in the future. For example, by the work of Thurston \cite{Thu82}, it is well-known that every knot is either a torus knot, hyperbolic knot, or a satellite knot. Since torus knots have a straightforward description in terms of the braid group, we intend to use its weighted adjacency matrix to prove the following conjecture:
\begin{conjecture}
Every $(p,q)$ torus knot admits a dimer model
\end{conjecture}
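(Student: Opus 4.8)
The plan is to prove the conjecture by the same philosophy used throughout the paper: for the standard diagram of the $(p,q)$ torus knot, realized as the closure of $(\sigma_1\sigma_2\cdots\sigma_{p-1})^q$, I would show that the partition function $Z$ summing over perfect matchings of the balanced overlaid Tait graph equals the weighted sum $S$ over spanning trees of the signed Tait graph. Since Theorem~\ref{thistlewaitestuff} guarantees that $S$ (after the writhe correction) is the bracket polynomial, the equality $Z=S$ is exactly the assertion that the knot admits a dimer model in the sense of Definition~\ref{dimermodel}. By the isotopy $(p,q)\cong(q,p)$ already exploited in Corollary~\ref{dimerfortorusknots}, I may assume $p\le q$, and the cases $p=1,2$ are settled by Section~\ref{torusdimer}; the content is therefore the regime $3\le p\le q$. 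The essential new feature, and the reason Theorem~\ref{dimerforbraidclosure2} does not already apply, is that the torus braid $(\sigma_1\cdots\sigma_{p-1})^q$ \emph{interleaves} its generators rather than grouping each $\sigma_i$ into a single block, so the balanced overlaid Tait graph no longer splits into independent $(2,q)$-type components after deleting two faces.

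First I would generalize Lemmas~\ref{taitgraph2qknot} and~\ref{balancedtaitgraph2q} to compute the two graphs explicitly. From the standard checkerboard coloring of the torus diagram, the signed Tait graph is a grid-like planar graph whose vertices correspond to the shaded faces arranged in roughly a $(p-1)\times q$ pattern (with the identifications forced by the braid closure), all of whose edges are positively signed, and the balanced overlaid Tait graph is its associated bipartite overlay with $V_1$ the crossings and $W$ the surviving faces after deleting two adjacent faces as in Definition~\ref{balancedoverlaidtait}. I would then fix an edge ordering by reading the crossings in braid-word order and assign activity letters via Definitions~\ref{activitysignedtait} and~\ref{activitybalancedtait}. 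The local picture along a single $\sigma_1\cdots\sigma_{p-1}$ layer should reproduce the alternating $L/D$ (shaded) and $\ell/d$ (unshaded) pattern that drove Proposition~\ref{activitywordtorus1}, now propagated in two directions across the grid.

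The core step is to construct a weight-preserving bijection between spanning trees of the Tait graph and perfect matchings of the balanced overlaid Tait graph. I would attempt this by a transfer-operator, layer-by-layer peeling argument: processing one column of crossings at a time, a choice of matching edges at that layer is matched to a choice of which edges of the corresponding cycle of the Tait graph are excluded, with the propagated constraints recorded in a boundary state passed to the next layer, so that summing over boundary states yields $Z=S$. An alternative, should the explicit bijection prove unwieldy, is to compare generating functions directly: use Theorem~\ref{cohenkasteleyn} to write $Z$ as the determinant of the modified adjacency matrix, exhibit the block-tridiagonal (transfer-matrix) structure coming from the layered form of the grid, and verify that this determinant satisfies the same recursion as the spanning-tree sum.

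The hard part will be the global bookkeeping of the activity letters. Because the generators interleave, the choice of matching edge at one crossing constrains its neighbors both horizontally (within a layer) and vertically (between layers) simultaneously, so perfect matchings do not factor into independent rows or columns as they did in Theorem~\ref{dimerforbraidclosure2}. Reconciling a purely local transfer operation with the inherently global ``lowest-ordered-edge'' conditions that distinguish $L$ from $D$ and $\ell$ from $d$ in Definitions~\ref{activitysignedtait} and~\ref{activitybalancedtait} is the crux: one must show that the activity weights are in fact determined by local boundary data alone, which I expect to require a carefully chosen edge ordering under which activity becomes a local property of the grid, together with a proof that every spanning tree and every matching decomposes compatibly with that ordering.
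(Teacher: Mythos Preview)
This statement is a \emph{conjecture} in the paper, appearing in Section~\ref{furtherdirections} (Further Directions) with no proof supplied; the authors explicitly state they ``intend to use its weighted adjacency matrix to prove'' it in future work. There is therefore no paper proof to compare your proposal against.

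As for your proposal itself, it is a research outline rather than a proof, and it is honest about this: you correctly identify that Theorem~\ref{dimerforbraidclosure2} does not apply because the torus braid interleaves its generators, you sketch a plausible transfer-matrix or bijective strategy, and you name the crux --- reconciling the global ``lowest-ordered-edge'' activity conditions of Definitions~\ref{activitysignedtait} and~\ref{activitybalancedtait} with a local layer-by-layer decomposition --- but you do not resolve it. That unresolved crux is exactly why this remains a conjecture. The claim that ``activity weights are in fact determined by local boundary data alone'' under a clever edge ordering is the whole ballgame, and nothing in the paper (or in your outline) establishes it for $p\ge 3$; the arguments of Section~\ref{hombraiddimer} succeed precisely because the balanced overlaid Tait graph disconnects into independent $(2,q)$ pieces, a structural feature that is lost once the generators interleave. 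Until you either exhibit such an ordering and verify the locality claim, or produce an explicit determinant-versus-tree recursion and check the initial conditions, the proposal remains a plan of attack on an open problem rather than a proof.
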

Additionally, we are interested in exploring other families of knots in which we can extend our results.

Another possible avenue we will consider is extending our results for other polynomial invariants such as in Section \ref{Kauffman Polynomials}. Including the Jones polynomial, many invariants from the field of quantum topology can be derived from combinatorial relations on the knot diagram known as skein relations such as in Figure \ref{crossingsdef}. Because many of our proofs rely on the recursive nature of the skein relations, it seems reasonable to believe that there are other modifications to our graph which will recover different polynomial invariants. In particular, it is known that the family of knot invariants known as the $N$-colored Jones polynomial can be obtained from taking a sum of bracket polynomials on different cablings of a knot diagram. In this sense, every $N$-colored Jones polynomial corresponds to a finite number of link diagrams in which a dimer model, possibly using the same construction, may exist. 

Lastly, we consider exploring known operations of knots and their relations towards dimer models. As mentioned in Remark \ref{general_adjacency_matrix}, it is possible to write out the  adjacency matrix for any closed braid. There are also various operations that could be performed on knot diagrams such as the connect sum, forgetful homomorphism, and satellite operations. In future works, we would like to explore if there are certain properties of the adjacency matrix which are well behaved under an operation.

\bibliographystyle{amsplain}

\end{document}